\newtheorem{theorem}{Theorem}
\newtheorem{lemma}[theorem]{Lemma}
\newtheorem{corollary}[theorem]{Corollary}
\newtheorem{proposition}[theorem]{Proposition}
\newtheorem{conjecture}[theorem]{Conjecture}
\newenvironment{definition}[1][Definition]{\begin{trivlist}
\item[\hskip \labelsep {\bfseries #1}]}{\end{trivlist}}
\title{Extremal Problems on Generalized Directed Hypergraphs}
\author{Alex Cameron}
\begin{document}
\maketitle

\begin{abstract}
In this paper we define a class of combinatorial structures the instances of which can each be thought of as a model of directed hypergraphs in some way. Each of these models is uniform in that all edges have the same internal structure, and each is simple in that no loops or multiedges are allowed. We generalize the concepts of Tur\'{a}n density, blowup density, and jumps to this class and show that many basic extremal results extend naturally in this new setting. In particular, we show that supersaturation holds, the blowup of a generalized directed hypergraph (GDH) has the same Tur\'{a}n density as the GDH itself, and degenerate GDHs (those with Tur\'{a}n density zero) can be characterized as being contained in a blowup of a single edge. Additionally, we show how the set of jumps from one kind of GDH relates to the set of jumps of another. Since $r$-uniform hypergraphs are an instance of the defined class, then  we are able to derive many particular instances of jumps and nonjumps for GDHs in general based on known results.
\end{abstract}

\section{Introduction}

This paper explores Tur\'{a}n-type problems for a class of relational structures that can each be thought of as generalized directed hypergraphs. This class includes the standard undirected $r$-uniform hypergraphs that have been extensively studied in combinatorics as well as totally directed $r$-uniform hypergraphs where each edge is a set of $r$ vertices under a linear ordering. Instances of this latter structure have been studied in the extremal setting by Erd\H{o}s, Brown, Simonovits, Harary, and others \cite{brown1973, brown1969, brown2002, brown1984}. Other instances of this class are uniform versions of the model used to represent definite Horn formulas in the study of propositional logic and knowledge representation \cite{angluin1992, russell2002}. The combinatorial properties of this model have been recently studied by Langlois, Mubayi, Sloan, and Gy. Tur\'{a}n in \cite{langlois2009} and by this author in \cite{cameron2015} and \cite{cameron2015deg}. Other structures in this class are slight variations on the $d$-simplex structures studied by Leader and Tan in \cite{leader2010}.

Tur\'{a}n-type extremal problems for uniform hypergraphs make up a large and well-known area of research in combinatorics that ask the following: ``Given a family of forbidden $r$-uniform hypergraphs $\mathcal{F}$ what is the maximum number of edges an $r$-uniform hypergraph on $n$ vertices can have without containing any member of $\mathcal{F}$ as a (not necessarily induced) subgraph?" Such problems were named after Paul Tur\'{a}n due to his important early results and conjectures concerning forbidden complete $r$-graphs \cite{turan1941, turan1954, turan1961}.

A related question for undirected hypergraphs was proposed by Erd\H{o}s known as the jumping constant conjecture. A real number $\alpha \in [0,1)$ is called a jump for an integer $r \geq 2$ if there exists some positive constant $c$ which depends only on $\alpha$ such that for any $\epsilon > 0$ and positive integer $l$ there exists a positive integer $N$ for which any $r$-uniform hypergraph on $n \geq N$ vertices which has edge density at least $\alpha + \epsilon$ contains a subgraph on $l$ vertices with edge density at least $\alpha + c$. It is well-known that when $r=2$, every $\alpha \in [0,1)$ is a jump \cite{erdos1966, erdos1946}. Moreover, every $\alpha \in \left[0, \frac{r!}{r^r}\right)$ is a jump for $r \geq 3$ \cite{erdos1971}. In 1984, Frankl and R\"{o}dl disproved the jumping constant conjecture when they found the first instance of a nonjump for each $r \geq 3$\cite{frankl1984}. Since then many infinite sequences of nonjumps have been found, but the smallest known nonjump to date is $\frac{5r!}{2r^r}$ for each $r \geq 3$ determined by Frankl, Peng, R\"{o}dl, and Talbot in \cite{frankl2007}. The only additional jumps that have been found are all $\alpha \in [0.2299,0.2316), \left[0.2871,\frac{8}{27} \right)$ for $r=3$ found by Baber and Talbot in \cite{baber2011} using Razborov's flag algebra method \cite{razborov2007}.

Extremal issues of these kinds have also been considered for digraphs and multigraphs (with bounded multiplicity) in \cite{brown1973} and \cite{brown1969} and for the more general directed multi-hypergraphs in \cite{brown1984}. In \cite{brown1969}, Brown and Harary determined the extremal numbers for several types of specific directed graphs including all tournaments - that is, a digraph with one edge in some orientation between every pair of vertices. In \cite{brown1973}, Brown, Erd\H{o}s, and Simonovits determined the general structure of extremal sequences for every forbidden family of digraphs analogous to the Tur\'{a}n graphs for simple graphs.

In \cite{brown1984}, Brown and Simonovits proved several general extremal results about $r$-uniform directed $q$-hypergraphs. In this model the edges are ordered $r$-tuples of vertices with multiplicity up to $q$ for some fixed positive integer $q$. Among their results on this model are three that will be reproduced in this paper in a more general setting: Supersaturation, Continuity, and Approximation. Roughly speaking supersaturation implies that a large graph with an edge-density more than the Tur\'{a}n density for a particular forbidden family must contain many copies of members of that family. Continuity shows that given an infinite forbidden family, we can get arbitrarily close to its extremal number with a finite subfamily. Approximation is a structural result that shows that given a forbidden family, we can approximate an extremal sequence to an arbitrarily small difference by taking some sequence of graphs that all exclude this family and which all fall into some ``nice" form. All of these notions will be made rigorous in the paper.

In \cite{langlois2010} and \cite{langlois2009}, Langlois, Mubayi, Sloan, and Gy. Tur\'{a}n studied extremal properties of certain small configurations in a directed hypergraph model. This model can be thought of as a $2 \rightarrow 1$ directed hypergraph where each edge has three verticies, two of which are ``tails" and the third is a ``head." They determined the extremal number for one such subgraph with two edges, and found the extremal number of a second configuration with two edges up to asymptotic equivalence. In \cite{cameron2015} and \cite{cameron2015deg}, this author followed up this work and found the exact extremal numbers for every $2 \rightarrow 1$ directed hypergraph with exactly two edges. This $2 \rightarrow 1$ model is one instance of the class of models discussed in this paper.

The graph theoretic properties of a more general definition of a directed hypergraph were studied by Gallo, Longo, Pallottino, and Nguyen in \cite{gallo1993}. There a directed hyperedge was defined to be some subset of vertices with a partition into head vertices and tail vertices. This is a nonuniform version of models considered in this paper.

The totally directed hypergraph model considered in \cite{brown1984} and the $r \rightarrow 1$ directed hypergraph model resulting from the study of Horn clauses both lead to the natural question of all possible ways to define a directed hypergraph. The definition in this paper of the class of general directed hypergraph models attempts to unify all of the possible ``natural" ways one could define a directed hypergraph so that certain extremal questions can be answered about all of them at once. Adding to the motivation of considering more general structures is the recent interest in Razborov's flag algebra method which applies to all relational theories and not just undirected hypergraphs. The fact that the $d$-simplex model studied by Leader as well as many other somewhat geometric models come out of the class defined in this paper was a very interesting accident.

This paper is organized as follows. In Section 2, we define the class of generalized directed hypergraphs and extend the concepts of Tur\'{a}n density, blowups, and supersaturation to this setting. In Section 3, we define the idea of a jump for a given model of directed hypergraphs and prove several results about these jumps and how the jumps from one instance of the class relate to jumps in another. In Section 4, we adapt a couple of results proved in \cite{brown1984} for totally directed hypergraphs with multiplicity to any GDH. In Section 5, we ask some questions that arose from studying these structures and discuss alternate definitions that would generalize the concept further.

\section{Basic Definitions and Results}

The following definition for a generalized directed hypergraph is intended to include most uniform models that could reasonably be called uniform directed hypergraphs. This includes models where the edges are $r$-sets each under some partition into $k$ parts of fixed sizes $r_1,\ldots,r_k$ with some linear ordering on the $k$ parts. The definition only includes structures where an $r$-set could include multiple edges up to the number of possible orientations allowed. That is, we do not consider the ``oriented" versions of the models where only one edge is allowed per $r$-set. The definition is given in terms of logic and model theory for convenience only. No deep results from those subfields are used. The use of this notation also makes further generalizations like nonuniform directed hypergraphs or oriented directed hypergraphs easy.

\begin{definition}
\label{GDHdef}
Let $\mathcal{L}=\{E\}$, a language with one $r$-ary relation symbol $E$. Let $T$ be an $\mathcal{L}$-theory that consists of a single sentence of the form \[\forall x_1 \cdots x_r E(x_1,\ldots,x_r) \implies \bigwedge_{i \neq j} x_i \neq x_j \land \bigwedge_{\pi \in J_T} E(x_{\pi(1)},\ldots,x_{\pi(r)})\] for some subgroup of the group of permutations on $r$ elements, $J_T \subseteq S_r$. Call such a theory a \emph{generalized directed hypergraph theory} and any finite model of $T$ is a \emph{generalized directed hypergraph (GDH)}.
\end{definition}

Note that this definition includes graphs, hypergraphs, and $r \rightarrow 1$ directed hypergraphs. For example, the theory for a $2 \rightarrow 1$ directed hypergraph is \[T = \{\forall xyz E(x,y,z) \implies x \neq y \land x \neq z \land y \neq z \land E(y,x,z)\}.\] It is easy to see that when $r=2$ we have only two GDH theories. The theory associated with the group $S_2$ is the theory of graphs, and the theory associated with the trivial group is the theory of directed graphs.

When $r=3$ there are six subgroups of $S_3$. Three of these are all isomorphic to $\mathbb{Z}_2$ with each generated by a permutation that swaps two elements. The corresponding GDH theory for any of these can be thought of as having pointed $3$-sets for edges or as being ($2 \rightarrow 1$)-graphs. Of the other subgroups, $S_3$ itself gives the theory of undirected $3$-uniform hypergraphs, the trivial group gives totally directed $3$-edges, and the subgroup generated by a three-cycle isomorphic to $\mathbb{Z}_3$ yields a GDH theory where the edges can be thought of as $3$-sets that have some kind of cyclic orientation - either clockwise or counter-clockwise. Figure~\ref{lattice} summarizes the models of GDHs when $r=3$. Note that in general, $S_r$ always corresponds to the normal undirected $r$-graph model and the trivial group always corresponds to totally directed hypergraphs.

\begin{figure}
  \centering
      \begin{tikzpicture}
      		\node[above] at (0,3) {$S_3$};
		\node[left] at (-1,2) {$\mathbb{Z}_3 $};
		\node[right] at (1,1) {$\mathbb{Z}_2$};
		\node[below] at (0,0) {$<i>$};
		\draw[->] (-1,2) -- (0,3);
		\draw[->] (1,1) -- (0,3);
		\draw[->] (0,0) -- (-1,2);
		\draw[->] (0,0) -- (1,1);
		
		\node at (2.5, 1.5) {$\implies$};
		
		\filldraw [black] (5.5,3.25) circle (1pt);
		\filldraw [black] (6,3.25) circle (1pt);
		\filldraw [black] (6.5,3.25) circle (1pt);
		\draw[thick] (5.5,3.25) -- (6.5,3.25);
		
		\filldraw [black] (4.25,2.25) circle (1pt);
		\filldraw [black] (4.75,1.75) circle (1pt);
		\filldraw [black] (3.75,1.75) circle (1pt);
		\draw[thick,->] (4.25,2.25) -- (4.75,1.75);
		\draw[thick,->] (4.75,1.75) -- (3.75,1.75);
		\draw[thick,->] (3.75,1.75) -- (4.25,2.25);
		
		\filldraw [black] (7.25,1.25) circle (1pt);
		\filldraw [black] (7.25,0.75) circle (1pt);
		\filldraw [black] (7.75,1) circle (1pt);
		\draw[thick] (7.25,1.25) -- (7.25,0.75);
		\draw[thick,->] (7.25,1) -- (7.75,1);
		
		\filldraw [black] (5.5,-0.25) circle (1pt);
		\filldraw [black] (6,-0.25) circle (1pt);
		\filldraw [black] (6.5,-0.25) circle (1pt);
		\draw[thick, ->] (5.5,-0.25) -- (6.5,-0.25);
		
		\draw[->] (6,0) -- (5,2);
		\draw[->] (6,0) -- (7,1);
		\draw[->] (7,1) -- (6,3);
		\draw[->] (5,2) -- (6,3);
	\end{tikzpicture}
  \caption{The subgroup lattice of $S_3$ and the corresponding lattice of directed hypergraphs.}
  \label{lattice}
\end{figure}
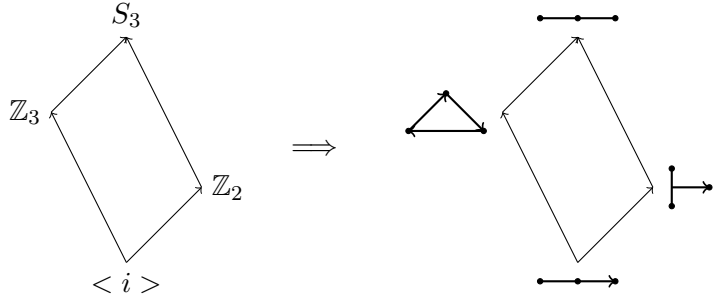

A fun thought experiment is to consider the kinds of edges that arise when $r=4$. Many of them are geometric in nature. For instance, the alternating group $A_4$ gives a theory where edges can be thought of tetrahedrons (at least in an abstract sense). In fact, in \cite{leader2010} Leader and Tan  study the ``oriented" versions of the models that come from the alternating groups for any $r \geq 3$.

In this paper when the theory is not specified we are simply discussing GDHs that are all models of the same fixed theory. When discussing multiple theories we will often refer to $T$-graphs to mean models of a GDH theory $T$. Throughout the paper, $J_T$ will always stand for the subgroup $J_T \subseteq S_r$ that determines the GDH theory $T$ and $m_T$ will always be the order of this subgroup, $m_T = |J_T|$. Also, $V_G$ and $E_G$ will be used to denote the underlying set of elements of a model $G$ and its relation set respectively.

The following basic propositions are given without proof. The first is a simple consequence thatwe are working in a relational language, and the second results from the fact that $J_T$ is a group.

\begin{proposition}
For any GDH theory $T$ and any nonnegative integer $n$, there exists a GDH $G \models T$ on $n$ elements. Moreover, for any nonnegative integer $k<n$, the substructure of $G$ induced on any $k$-subset of the elements of $G$ is also a $T$-graph.
\end{proposition}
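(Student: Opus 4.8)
The plan is to construct the required GDH directly and then verify that the induced substructure condition follows essentially for free from the syntactic form of the axiom in Definition~\ref{GDHdef}. For the existence part, I would take the underlying set to be any $n$-element set, say $[n] = \{1,\ldots,n\}$, and simply set $E_G = \varnothing$. Since the single axiom of $T$ has the form $\forall x_1\cdots x_r\, E(x_1,\ldots,x_r) \implies (\cdots)$, a universally quantified implication whose hypothesis is the relation $E$, the axiom is vacuously satisfied whenever the interpretation of $E$ is empty. Hence the empty GDH on $n$ elements is a model of $T$, which settles the first claim.

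For the ``moreover'' part, let $G \models T$, let $S \subseteq V_G$ with $|S| = k < n$, and let $H$ be the substructure of $G$ induced on $S$, so $V_H = S$ and $E_H = E_G \cap S^r$ (the restriction of the relation to tuples from $S$). I would check the axiom for $H$ by fixing an arbitrary tuple $(a_1,\ldots,a_r) \in S^r$ with $(a_1,\ldots,a_r) \in E_H$. Then $(a_1,\ldots,a_r) \in E_G$, so by $G \models T$ we get both $a_i \neq a_j$ for all $i \neq j$ and $(a_{\pi(1)},\ldots,a_{\pi(r)}) \in E_G$ for every $\pi \in J_T$. The distinctness conclusion transfers to $H$ immediately. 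For the permutation closure, each permuted tuple $(a_{\pi(1)},\ldots,a_{\pi(r)})$ is still a tuple of elements of $S$, hence lies in $S^r$, so together with membership in $E_G$ it lies in $E_H = E_G \cap S^r$. Thus $H$ satisfies the axiom, i.e.\ $H \models T$, and $H$ is a $T$-graph.

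The argument is genuinely routine; there is no real obstacle, which is presumably why the authors state it without proof. The only point that requires a moment's care is making sure the ``substructure induced on a subset'' is the right notion: one must use that in a purely relational language the induced substructure simply restricts each relation to tuples over the chosen subset, with no function symbols or constants to worry about, so closure conditions expressed by positive formulas (here, the conjunction of $E(x_{\pi(1)},\ldots,x_{\pi(r)})$ over $\pi \in J_T$) are inherited downward. If anything, I would briefly remark that this is exactly the standard model-theoretic fact that universal sentences are preserved under substructures, applied to the particular universal sentence defining $T$; the distinctness clause is preserved for the same reason. One could also note that the nonemptiness of the edge set is never needed, so in fact every subset of every model works regardless of how many edges $G$ has.
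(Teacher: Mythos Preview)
Your proof is correct and matches the paper's intended reasoning: the authors state the proposition without proof, noting only that it ``is a simple consequence that we are working in a relational language,'' which is exactly the content of your argument (empty relation for existence, and preservation of the universal axiom under induced substructures). Nothing more is needed.
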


\begin{proposition}
Given a GDH $G$ with $r$-ary relation set $E_G$, there exists an equivalence relation $\sim$ on $E_G$ defined by \[(a_1,\ldots,a_r) \sim (b_1,\ldots,b_r)\] if and only if for each $i$, $b_i = a_{\pi(i)}$ for some $\pi \in J_T$.
\end{proposition}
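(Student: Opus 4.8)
The plan is to verify directly that $\sim$ satisfies the three defining properties of an equivalence relation, with each property coming from one of the group axioms satisfied by $J_T$: membership of the identity gives reflexivity, closure under inverses gives symmetry, and closure under composition gives transitivity. Before doing that, I would first record that $\sim$ is genuinely a relation \emph{on} $E_G$: if $(a_1,\ldots,a_r) \in E_G$ and $(b_1,\ldots,b_r)$ is obtained from it by setting $b_i = a_{\pi(i)}$ for some $\pi \in J_T$, then the defining sentence of $T$ --- which asserts $E(x_1,\ldots,x_r) \implies \bigwedge_{\pi \in J_T} E(x_{\pi(1)},\ldots,x_{\pi(r)})$ --- forces $E(a_{\pi(1)},\ldots,a_{\pi(r)})$, i.e. $(b_1,\ldots,b_r) \in E_G$. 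Hence each $\sim$-class of an element of $E_G$ consists entirely of elements of $E_G$, which is what makes the statement meaningful.

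For the axioms themselves, read the definition with the quantifiers in the intended order (``there exists $\pi \in J_T$ such that for every $i$ we have $b_i = a_{\pi(i)}$''). Reflexivity is witnessed by the identity permutation, which lies in $J_T$ because $J_T$ is a subgroup. For symmetry, if $(a_1,\ldots,a_r) \sim (b_1,\ldots,b_r)$ via $\pi$, then $b_{\pi^{-1}(i)} = a_{\pi(\pi^{-1}(i))} = a_i$ for all $i$, and $\pi^{-1} \in J_T$; thus $(b_1,\ldots,b_r) \sim (a_1,\ldots,a_r)$. For transitivity, if $b_i = a_{\pi(i)}$ for all $i$ and $c_i = b_{\sigma(i)}$ for all $i$, with $\pi,\sigma \in J_T$, then $c_i = b_{\sigma(i)} = a_{\pi(\sigma(i))} = a_{(\pi\sigma)(i)}$, and $\pi\sigma \in J_T$ by closure, so $(a_1,\ldots,a_r) \sim (c_1,\ldots,c_r)$.

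There is essentially no deep obstacle here; the only points requiring care are bookkeeping ones. First, the quantifier order in the definition must be read as ``a single $\pi$ that works simultaneously for all coordinates'' rather than ``a possibly different $\pi$ for each coordinate'' --- under the latter reading the relation would fail to be transitive in general, so this is the one place where the statement could go wrong if misinterpreted. Second, one must keep the composition order straight when chaining two permutations in the transitivity step. (As an aside, because the axiom of $T$ also forces the coordinates of any edge to be pairwise distinct, the permutation $\pi$ witnessing $(a_1,\ldots,a_r) \sim (b_1,\ldots,b_r)$ is in fact unique, though uniqueness is not needed for the proof.)
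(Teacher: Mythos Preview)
Your proof is correct and is precisely the argument the paper has in mind: the proposition is stated there without proof, with only the remark that it ``results from the fact that $J_T$ is a group,'' and your verification of reflexivity, symmetry, and transitivity from the identity, inverse, and closure properties of $J_T$ is exactly that. Your additional observations---that $\sim$ really is a relation on $E_G$ because the theory forces closure under $J_T$, and that the witnessing $\pi$ is unique since edge coordinates are distinct---are nice touches that the paper does not spell out.
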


We can now use these propositions to extend the concepts of extremal graph theory to GDHs in a natural way.

\begin{definition}
For any GDH $G$, an \emph{edge} of $G$ will always refer to an equivalence class of $[E_G]_{\sim}$.
\end{definition}

\begin{definition}
Given a GDH $G$ on $n$ elements, denote the number of edges of $G$ by $e_T(G)$ and let the \emph{edge density} of $G$ be defined as \[d_T(G) := \frac{e_T(G)}{\frac{r!}{m_T} {n \choose r}}.\]
\end{definition}

Note that since \[e_T(G) = \frac{|E_G|}{m_T},\] then the density is \[d_T(G) = \frac{(n-r)!|E_G|}{n!}\] and could have been defined this way while mostly avoiding talk of edges as equivalence classes of $E_G$. However, the above definition makes the following extremal concepts reduce to their standard definitions in the undirected case.

\begin{definition}
Given two GDHs $G$ and $H$ and a function $\psi: V_H \rightarrow V_G$, we say that $\psi$ is a \emph{homomorphism} if for all $(a_1,\ldots,a_r) \in E_H$, $(\psi(a_1),\ldots,\psi(a_r)) \in E_G$.

We say that $G$ contains a copy of $H$ if there exists some injective homomorphism, $\psi:V_H \rightarrow V_G$. Otherwise, we say that $G$ is $H$-free. Similarly, we would say that a GDH $G$ is $\mathcal{F}$-free for some family $\mathcal{F}$ of GDHs if $G$ is $F$-free for all $F \in \mathcal{F}$.
\end{definition}

\begin{definition}
Given a family of GDHs $\mathcal{F}$ and a positive integer $n$, let the $n$th \emph{extremal number}, $\text{ex}_T(n,\mathcal{F})$, be defined as the maximum number of edges over all $\mathcal{F}$-free GDHs on $n$ elements, \[\text{ex}_T(n,\mathcal{F}) := \max_{\mathcal{F}\text{-free } G_n} \{e_T(G_n) \}.\] The \emph{Tur\'{a}n density} of $\mathcal{F}$ is defined as \[\pi_T(\mathcal{F}) := \lim_{n \rightarrow \infty} \frac{\text{ex}_T(n,\mathcal{F})}{\frac{r!}{m_T} {n \choose r}}.\]
\end{definition}

Our first main result is to show that these Tur\'{a}n densities exist for any GDH theory. The proof is the standard averaging argument used to show that these limiting densities exist for families of undirected hypergraphs \cite{keevash2011}.

\begin{theorem}
For any GDH family $\mathcal{F}$ the Tur\'{a}n density exists.
\end{theorem}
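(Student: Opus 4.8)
The plan is to mimic the classical Katona–Nemetz–Simonovits averaging argument. The key quantity to control is the ``density'' $d_T(n) := \mathrm{ex}_T(n,\mathcal{F}) / \left( \frac{r!}{m_T}\binom{n}{r} \right)$, and the goal is to show that $d_T(n)$ is (eventually) nonincreasing in $n$, which together with boundedness in $[0,1]$ forces the limit to exist. First I would fix $n > r$ and let $G$ be an $\mathcal{F}$-free $T$-graph on $n$ vertices achieving $e_T(G) = \mathrm{ex}_T(n,\mathcal{F})$. By the first proposition, every induced substructure of $G$ on an $(n-1)$-subset is again a $T$-graph, and it is clearly still $\mathcal{F}$-free (an injective homomorphism into a substructure composes with the inclusion to give one into $G$). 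Hence each of the $n$ induced substructures on $(n-1)$-subsets has at most $\mathrm{ex}_T(n-1,\mathcal{F})$ edges.

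The next step is the double-counting identity: each edge of $G$ is an equivalence class represented by an $r$-tuple whose underlying $r$-set is avoided by exactly $n-r$ of the $(n-1)$-subsets, so summing $e_T(G_S)$ over all $(n-1)$-subsets $S$ counts each edge of $G$ exactly $n-r$ times. This gives
\[
(n-r)\,\mathrm{ex}_T(n,\mathcal{F}) = (n-r)\,e_T(G) = \sum_{|S| = n-1} e_T(G_S) \le n\cdot \mathrm{ex}_T(n-1,\mathcal{F}).
\]
Dividing by $\frac{r!}{m_T}\binom{n}{r}(n-r)$ and using $\binom{n}{r}(n-r) = n\binom{n-1}{r}$, the factors of $n$ cancel and one obtains $d_T(n) \le d_T(n-1)$ for all $n > r$. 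Since $0 \le d_T(n) \le 1$ for every $n$, the sequence $\{d_T(n)\}_{n \ge r}$ is monotone and bounded, hence convergent; its limit is by definition $\pi_T(\mathcal{F})$, so the Tur\'{a}n density exists.

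I do not anticipate a genuine obstacle here — the argument is structurally identical to the undirected case — but the one point requiring a little care is the bookkeeping that edges are equivalence classes of $r$-tuples under $J_T$ rather than plain $r$-sets. One must check that ``an edge of $G$ survives in the induced substructure $G_S$ if and only if all $r$ of its vertices lie in $S$'', which holds because $J_T$ permutes coordinates and so the orbit of an $r$-tuple uses exactly the same underlying set of vertices as the tuple itself; consequently the counting multiplicity $n-r$ is the same for every edge, exactly as with set-edges. Equivalently, one can sidestep equivalence classes entirely by running the identical averaging argument on $|E_G|$ using the formula $d_T(G) = (n-r)!\,|E_G|/n!$ noted in the excerpt, which makes the cancellation transparent.
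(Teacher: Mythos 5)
Your proof is correct and follows essentially the same averaging argument as the paper: delete one vertex at a time, note each edge survives in exactly $n-r$ of the induced $(n-1)$-vertex substructures, deduce that the density sequence is monotone decreasing and bounded, hence convergent. Your extra remark that an equivalence-class edge has the same underlying vertex set as any of its representing tuples (so the multiplicity $n-r$ is uniform) is a point the paper takes for granted, and it is handled correctly.
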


\begin{proof}
Let $G$ be an $\mathcal{F}$-free GDH on $n$ elements with $\text{ex}_T(n,\mathcal{F})$ edges. For each $i=1,\ldots,n$ let $G^i$ be the subGDH of $G$ induced by removing the $i$th vertex. Each edge of $G$ appears in exactly $n-r$ of these subGDHs. Therefore, \[(n-r)e_T(G) = e_T(G^1)+ \cdots e_T(G^n).\] Moreover, $e_T(G) = \text{ex}_T(n,\mathcal{F})$ and each $G^i$ is also $\mathcal{F}$-free so $e_T(G^i) \leq \text{ex}_T(n-1,\mathcal{F})$. Therefore, \[\text{ex}_T(n,G) \leq \frac{n}{n-r} \text{ex}_T(n-1,\mathcal{F}).\] So \[\frac{\text{ex}_T(n,G)}{\frac{r!}{m_T} {n \choose r}} \leq \frac{n}{n-r} \frac{\text{ex}_T(n-1,\mathcal{F})}{\frac{r!}{m_T} {n \choose r}} = \frac{\text{ex}_T(n-1,\mathcal{F})}{\frac{r!}{m_T} {n-1 \choose r}}.\] Therefore, the sequence of these extremal densities is monotone decreasing as a function of $n$ in the range $[0,1]$. Hence, the limit exists.
\end{proof}

\subsection{Blowups and Blowup Density}

We'll now extend the concept of the blowup of uniform hypergraphs to the more general setting of GDHs and define the corresponding notion of the blowup density. As with hypergraphs, the blowup of a GDH can be thought of as the replacement of each vertex with many copies and taking all of the resulting edges. Formally,

\begin{definition}
Let $G$ be a GDH with $V_G = \{x_1,\ldots,x_n\}$, and let $t = (t_1,\ldots, t_n)$ be a tuple of positive integers. Define the \emph{$t$-blowup} of $G$ to be the $\mathcal{L}$-structure $G(t)$ where \[V_{G(t)} = \{x_{11},\ldots,x_{1t_1},\ldots,x_{n1},\ldots,x_{nt_n}\}\] and \[(x_{i_1j_1},\ldots,x_{i_rj_r}) \in E_{G(t)} \iff (x_{i_1},\ldots,x_{i_r}) \in E_G.\]
\end{definition}

\begin{proposition}
Let $G$ be a GDH on $n$ vertices, and let $t=(t_1,\ldots,t_n)$ be a tuple of positive integers. Then the $t$-blowup of $G$ is also a GDH.
\end{proposition}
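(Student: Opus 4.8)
The plan is to verify directly that $G(t)$ satisfies the single sentence that constitutes the theory $T$, using only the definition of the blowup and the fact that $G \models T$. Write $y_k := x_{i_k j_k}$ for a generic element of $V_{G(t)}$, where $1 \leq i_k \leq n$ and $1 \leq j_k \leq t_{i_k}$. I need to show that whenever $(y_1,\ldots,y_r) \in E_{G(t)}$, the $y_k$ are pairwise distinct and $(y_{\pi(1)},\ldots,y_{\pi(r)}) \in E_{G(t)}$ for every $\pi \in J_T$. Finiteness of $G(t)$ is immediate, since it has $t_1 + \cdots + t_n$ vertices.

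For distinctness: by the defining equivalence of the blowup, $(y_1,\ldots,y_r) = (x_{i_1 j_1},\ldots,x_{i_r j_r}) \in E_{G(t)}$ holds exactly when $(x_{i_1},\ldots,x_{i_r}) \in E_G$. Since $G \models T$, this forces the indices $i_1,\ldots,i_r$ to be pairwise distinct. A fortiori the index pairs $(i_k,j_k)$ are pairwise distinct, so the vertices $y_k = x_{i_k j_k}$ are pairwise distinct elements of $V_{G(t)}$.

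For closure under $J_T$: fix $\pi \in J_T$ and suppose $(y_1,\ldots,y_r) \in E_{G(t)}$, so $(x_{i_1},\ldots,x_{i_r}) \in E_G$. Because $G \models T$ and $\pi \in J_T$, we also have $(x_{i_{\pi(1)}},\ldots,x_{i_{\pi(r)}}) \in E_G$. The permuted tuple $(y_{\pi(1)},\ldots,y_{\pi(r)})$ equals $(x_{i_{\pi(1)} j_{\pi(1)}},\ldots,x_{i_{\pi(r)} j_{\pi(r)}})$, whose projection onto first indices is $(i_{\pi(1)},\ldots,i_{\pi(r)})$; hence by the blowup equivalence applied in the reverse direction, $(y_{\pi(1)},\ldots,y_{\pi(r)}) \in E_{G(t)}$. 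Together with distinctness, this is precisely what the sentence of $T$ demands, so $G(t) \models T$ and $G(t)$ is a GDH.

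I do not expect a genuine obstacle here; the only thing to be careful about is the double-index bookkeeping — in particular, applying $\pi$ to the composite labels $(i_k,j_k)$ as a single block, so that the $k$th coordinate of the permuted tuple is $x_{i_{\pi(k)} j_{\pi(k)}}$ and its first-coordinate projection is genuinely the $\pi$-image of the projection of the original tuple. Once that is pinned down, both required properties pass from $G$ to $G(t)$ through the single ``iff'' in the definition of the blowup.
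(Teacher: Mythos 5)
Your proof is correct and follows essentially the same route as the paper's: deduce distinctness of the first indices $i_1,\ldots,i_r$ from $G \models T$ (hence distinctness of the blown-up vertices), and push each $\pi \in J_T$ through the defining equivalence of $E_{G(t)}$ to get closure under $J_T$. The extra care you take with the double-index bookkeeping is sound but does not change the argument.
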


\begin{proof}
We need only show that the $\mathcal{L}$-structure $G(t)$ models $T$. So let \[(x_{i_1j_1},\ldots,x_{i_rj_r}) \in E_{G(t)}.\] Then $(x_{i_1},\ldots,x_{i_r}) \in E_G$. Since $G \models T$ this implies that $i_a \neq i_b$ whenever $a \neq b$. Hence, the elements $x_{i_aj_a} \neq x_{i_bj_b}$ whenever $a \neq b$. It also implies that $(x_{i_{\pi(1)}},\ldots,x_{i_{\pi(r)}}) \in E_G$ for any $\pi \in J_T$. Hence, \[(x_{i_{\pi(1)}j_{\pi(1)}},\ldots,x_{i_{\pi(r)}j_{\pi(r)}}) \in E_{G(t)}\] for any $\pi \in J_T$. Therefore, $G(t) \models T$.
\end{proof}

Next, we consider the edge density of a given blowup by defining the edge polynomial for a GDH. 

\begin{definition}
Let $G$ be a GDH on $n$ vertices. For each $r$-set $R \in {V_G \choose r}$, let $e_R$ be the number of edges of $G$ in $R$. Then let the \emph{edge polynomial} be \[p_G(x) := \sum_{R \in {V_G \choose r}} e_R\prod_{i \in R} x_i.\]
\end{definition}

This polynomial is a simple generalization of the standard edge polynomial for undirected hypergraphs. To see this more easily note that for a given GDH $G$, the edges of $G$ are in bijection with the monomials the sum $p_G$ were we to write the sum out with no coefficients greater than one.

From this we see that the edge density of the $(t_1,\ldots,t_n)$-blowup of $G$ is \[\frac{p_G(t_1,\ldots,t_n)}{\frac{r!}{m_T}{t \choose r}} = m_T\frac{p_G(t_1,\ldots,t_n)}{t(t-1)\cdots(t-r+1)}\] where $t= \sum t_i$. Let $t$ increase to infinity and for each $t$ pick a vector $(t_1,\ldots,t_n)$ that maximizes this edge density. Then this sequence of densities is asymptotically equivalent to the sequence of numbers \[m_T p_G\left(\frac{t_1}{t},\ldots,\frac{t_n}{t}\right).\] This motivates the following definition.

\begin{definition}
Let $G$ be a GDH on $n$ vertices. Let \[S^n = \left\{(x_1,\ldots,x_n)|x_i \geq 0 \land \sum_{i=1}^nx_i = 1\right\},\] the standard $(n-1)$-dimensional simplex. Define the \emph{blowup density} of $G$ as \[b_T(G) = m_T \max_{x \in S^n} \{p_G(x)\}.\]
\end{definition}

Since any $x \in S^n$ is the limit of some sequence $\left\{\left(\frac{t_1}{t},\ldots,\frac{t_n}{t}\right)\right\}$ with positive $t_i$ as $t \rightarrow \infty$, then the blowup density of a GDH $G$ is the best limiting density of any sequence of blowups of $G$.

The remaining definition and basic result about blowups given in this subsection will be useful when extending results about jumps and nonjumps from undirected hypergraphs to GDHs generally in Section 3.

\begin{definition}
Let $T'$ and $T$ be GDH theories such that $J_{T'} \subseteq J_{T} \subseteq S_r$. For a $T$-graph $F$ and a $T'$-graph $F'$ we say that $F$ \emph{contains} $F'$ if $V_F = V_{F'}$ and every edge of $F'$ is contained in some edge of $F$ (where the edges are considered under their equivalence class definition as subsets of $E_F$ and $E_{F'}$). We say that $F$ is the \emph{minimum $T$-container} of $F'$ if $F$ has no edges that do not contain edges of $F'$.
\end{definition}

\begin{proposition}
\label{containing}
Let $T'$ and $T$ be GDH theories such that $J_{T'} \subseteq J_{T} \subseteq S_r$. Let $F'$ be a $T'$-graph and let $F$ be the minimum $T$-container of $F'$. Then \[\frac{m_{T'}}{m_T}b_T(F) \leq b_{T'}(F') \leq b_T(F)\] with equality on the left if $F'$ has exactly one edge contained in each edge of $F$ and equality on the right if each edge of $F$ contains all $\frac{m_T}{m_{T'}}$ possible edges of $F'$.

Moreover, if $F'$ has exactly $k$ edges contained in each edge of $F$, then \[b_{T'}(F') =\frac{km_{T'}}{m_T}b_T(F).\]
\end{proposition}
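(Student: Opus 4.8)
The plan is to prove every assertion by comparing the edge polynomials $p_F$ and $p_{F'}$ one $r$-set at a time and then passing to the maximum over the simplex. Say $F'$ (hence $F$, since $V_F = V_{F'}$) has $n$ vertices. Fix $R \in {V_F \choose r}$, and let $e_R(F)$ and $e_R(F')$ be the numbers of edges of $F$ and of $F'$, respectively, with vertex set $R$ — these are exactly the coefficients occurring in the definitions of $p_F$ and $p_{F'}$. The whole proposition will follow from the single pair of inequalities
\[
e_R(F) \;\le\; e_R(F') \;\le\; \frac{m_T}{m_{T'}}\, e_R(F),
\]
together with their refinements in the equality cases.

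To get these, first recall that by Definition~\ref{GDHdef} every tuple lying in an edge has pairwise distinct coordinates, so $S_r$ (and therefore each of $J_T$, $J_{T'}$) acts freely on the $r!$ orderings of $R$. Hence each $J_T$-orbit of orderings of $R$ has size exactly $m_T$ and, since $J_{T'} \subseteq J_T$, breaks up into exactly $[J_T:J_{T'}] = m_T/m_{T'}$ orbits under $J_{T'}$, while each $J_{T'}$-orbit sits inside a unique $J_T$-orbit. In the terminology of the paper: on the vertex set $R$ each possible $T$-edge contains exactly $m_T/m_{T'}$ possible $T'$-edges, and each possible $T'$-edge is contained in exactly one possible $T$-edge. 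Now use that $F$ is the minimum $T$-container of $F'$: every edge of $F'$ is contained in some edge of $F$ (with the same vertex set), and every edge of $F$ contains at least one edge of $F'$. Thus sending each edge of $F'$ on $R$ to the unique edge of $F$ containing it is a well-defined surjection from the $e_R(F')$ edges of $F'$ on $R$ onto the $e_R(F)$ edges of $F$ on $R$, whose fibers have size between $1$ and $m_T/m_{T'}$; summing fiber sizes gives precisely the displayed inequality.

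Multiplying the displayed inequality by $\prod_{i \in R} x_i$ and summing over all $R \in {V_F \choose r}$ — all terms being nonnegative on $S^n$ — yields $p_F(x) \le p_{F'}(x) \le \frac{m_T}{m_{T'}} p_F(x)$ for every $x \in S^n$, hence the same chain for the maxima over $S^n$. Multiplying through by $m_{T'}$ and using $m_{T'}\max_{S^n} p_{F'} = b_{T'}(F')$, $m_{T'}\max_{S^n}p_F = \frac{m_{T'}}{m_T}\,b_T(F)$, and $\frac{m_T}{m_{T'}}\cdot m_{T'}\max_{S^n}p_F = m_T\max_{S^n}p_F = b_T(F)$, we obtain $\frac{m_{T'}}{m_T}\,b_T(F) \le b_{T'}(F') \le b_T(F)$. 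The equality cases and the ``moreover'' clause come from the same pointwise identities: the fiber over an edge of $F$ has size equal to the number of edges of $F'$ contained in that edge, so if this number is $1$ for every edge of $F$ then $p_{F'} = p_F$ and we get equality on the left; if it is $m_T/m_{T'}$ for every edge then $p_{F'} = \frac{m_T}{m_{T'}}p_F$ and we get equality on the right; and if it equals a fixed $k$ for every edge of $F$ then $p_{F'} = k\,p_F$ on all of $S^n$, whence $b_{T'}(F') = k\,m_{T'}\max_{S^n}p_F = \frac{k m_{T'}}{m_T}\,b_T(F)$.

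The only genuine obstacle is the bookkeeping in the middle step: making fully rigorous the identification of ``edges of $F'$ contained in an edge of $F$'' — a relation defined through equivalence classes inside $E_F$ and $E_{F'}$ — with $J_{T'}$-suborbits of a $J_T$-orbit, and in particular verifying that the relevant stabilizers are trivial so that the orbit counts are exactly $m_T$, $m_{T'}$, and $m_T/m_{T'}$ rather than merely divisors thereof. Once that correspondence is nailed down, the remainder is a routine monotonicity argument for polynomials with nonnegative coefficients on the simplex.
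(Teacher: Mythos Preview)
Your argument is correct and follows the same route as the paper: establish the pointwise polynomial bounds $p_F(x) \le p_{F'}(x) \le \frac{m_T}{m_{T'}}p_F(x)$ on $S^n$ and then pass to the maximum. The paper simply asserts this chain of inequalities with its equality cases, whereas you justify it coefficient-by-coefficient via the orbit/fiber analysis; so your write-up is a more detailed execution of the same proof rather than a different one.
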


\begin{proof}
Let $|V_{F'}| = |V_{F}|=v$, then for any $x \in S^v$, \[p_F(x) \leq p_{F'}(x) \leq \frac{m_T}{m_{T'}}p_F(x)\] with equality on the left if $F'$ has exactly one edge contained in each edge of $F$ and equality on the right if each edge of $F$ contains all $\frac{m_T}{m_{T'}}$ possible edges of $F'$. Hence, \[\max_{x \in S^v}{p_F(x)} \leq \max_{x \in S^v}{p_{F'}(x)} \leq \max_{x \in S^v}{\frac{m_T}{m_{T'}}p_F(x)}.\] This implies that \[\frac{m_{T'}}{m_T}b_T(F) \leq b_{T'}(F') \leq b_T(F).\] In particular, if $F'$ has exactly $k$ edges contained in each edge of $F$, then for any $x \in S^v$, \[p_{F'}(x) = kp_F(x)\] which implies the result.
\end{proof}

\subsection{Supersaturation and Related Results}

Supersaturation holds for GDHs as it does for undirected hypergraphs, and the proof of this result is the same as the one for hypergraphs found in \cite{keevash2011} with only minor differences.

\begin{theorem}[Supersaturation]
\label{supersaturation}
Let $F$ be a GDH on $k$ elements. Let $\epsilon > 0$. For sufficiently large $n \geq n_0(F,\epsilon)$, any GDH $G$ on $n$ elements with density $d(G) \geq \pi_T(F) + \epsilon$ will contain at least $c {n \choose k}$ copies of $F$ for some constant $c = c(F,\epsilon)$.
\end{theorem}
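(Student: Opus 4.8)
The plan is to follow the standard averaging/counting argument for supersaturation of undirected hypergraphs, adapting it to the GDH setting. First I would fix $F$ on $k$ elements and choose, using the definition of the Tur\'{a}n density, an integer $m \geq k$ large enough that $\text{ex}_T(m,F) / \left( \frac{r!}{m_T}\binom{m}{r}\right) < \pi_T(F) + \epsilon/2$; that is, any $F$-free $T$-graph on $m$ vertices has density less than $\pi_T(F) + \epsilon/2$. The key consequence is the contrapositive: any $T$-graph on $m$ vertices with density at least $\pi_T(F) + \epsilon/2$ must contain a copy of $F$.

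Next I would take $G$ on $n$ vertices with $d_T(G) \geq \pi_T(F) + \epsilon$ and count pairs $(M, \phi)$ where $M$ ranges over the $\binom{n}{m}$ induced sub-$T$-graphs on $m$-subsets of $V_G$. The averaging step is to show that for $n$ sufficiently large, a positive proportion (at least some $\delta = \delta(\epsilon)$) of these $m$-subsets induce a sub-$T$-graph of density at least $\pi_T(F) + \epsilon/2$: this is because the average density of an induced $m$-subset equals $d_T(G)$ (each edge of $G$ lies in the same number of $m$-subsets, namely $\binom{n-r}{m-r}$ out of $\binom{n}{m}$ weighted appropriately by the $\frac{r!}{m_T}$ normalization, which cancels cleanly), so if only a tiny fraction had density $\geq \pi_T(F)+\epsilon/2$ while the rest had density $< \pi_T(F)+\epsilon/2$, and all densities are bounded by $1$, the overall average would be strictly below $\pi_T(F)+\epsilon$ for $n$ large — a Markov-type inequality argument. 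Each such dense $m$-subset contains at least one copy of $F$ by the choice of $m$, giving at least $\delta \binom{n}{m}$ copies of $F$ counted with multiplicity over $m$-subsets.

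Finally I would convert this into the claimed bound of $c\binom{n}{k}$ copies of $F$ on $k$-subsets: each copy of $F$ sitting inside $V_G$ (i.e., each injective homomorphic image, identified by its $k$-subset together with the relevant structure) is contained in exactly $\binom{n-k}{m-k}$ of the $m$-subsets, so the number of distinct copies of $F$ in $G$ is at least $\delta \binom{n}{m} / \binom{n-k}{m-k} = \delta \binom{n}{k}/\binom{m}{k}$, and we set $c = \delta/\binom{m}{k}$. Here I should be slightly careful that ``copies of $F$'' counted by injective homomorphisms versus by vertex-subsets differ by at most a factor of $k!$, which only changes the constant $c$, so it is harmless.

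The main obstacle — really the only place requiring care rather than bookkeeping — is the averaging step: verifying that the expected edge density over uniformly random $m$-subsets of $V_G$ equals exactly $d_T(G)$ (so that the normalization factors $\frac{r!}{m_T}$ and the binomial coefficients telescope correctly), and then extracting from $d_T(G) \geq \pi_T(F)+\epsilon$ and the uniform upper bound $d_T \leq 1$ a fixed positive lower bound $\delta$ on the fraction of $m$-subsets with density at least $\pi_T(F)+\epsilon/2$, valid for all large $n$. This is exactly the hypergraph computation from \cite{keevash2011}, and the $m_T$ factors cancel because they appear identically in $e_T$ and in the density normalization, so no genuinely new difficulty arises in the GDH setting.
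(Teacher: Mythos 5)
Your proposal is correct and follows essentially the same argument as the paper: choose $m$ (the paper's $l$) so that density $\pi_T(F)+\epsilon/2$ on $m$ vertices forces a copy of $F$, show by averaging over $m$-subsets that a positive fraction of them are that dense, and divide by the number of $m$-subsets containing a fixed copy of $F$. The only cosmetic difference is that the paper makes the Markov-type step explicit with $\delta=\epsilon/2$, yielding $c=\frac{\epsilon}{2}\binom{l}{k}^{-1}$.
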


\begin{proof}
Fix some positive integer $l$ so that \[\text{ex}_T(l,F) < \left(\pi_T(F) + \frac{\epsilon}{2}\right) \frac{r!}{m_T} {l \choose r}.\] Let $G$ be a GDH on $n > l$ elements with edge density $d_T(G) \geq \pi(F) + \epsilon$. Then $G$ must contain more than $\frac{\epsilon}{2}{n \choose l}$ $l$-sets with density at least $\pi_T(F) + \frac{\epsilon}{2}$. Otherwise, at most $\frac{\epsilon}{2}{n \choose l}$ $l$-sets contain more than $\left(\pi_T(F) + \frac{\epsilon}{2}\right){l \choose r}$ edges. Therefore, we can count the number of edges in $G$ by $l$-sets and get an upper bound of \[{n-r \choose l-r}e_T(G) \leq \frac{\epsilon}{2} {n \choose l} {l \choose r} \frac{r!}{m_T} + \left(1-\frac{\epsilon}{2}\right){n \choose l}\left(\pi_T(F)+\frac{\epsilon}{2}\right){l \choose r}\frac{r!}{m_T}.\] We can now replace $e_T(G)$ since \[e_T(G) \geq \left(\pi_T(F)+\epsilon\right){n \choose r} \frac{r!}{m_T}.\] This is enough to get the contradiction.

Since $G$ contains more than $\frac{\epsilon}{2}{n \choose l}$ $l$-sets with density at least $\pi_T(F) + \frac{\epsilon}{2}$, then it contains a copy of $F$ in each. A given copy of $F$ appears in ${n-k \choose l-k}$ $l$-sets of $G$. Therefore, there are more than \[\frac{\epsilon}{2}{n \choose l}{n-k \choose l-k}^{-1} = c {n \choose k}\] distinct copies of $F$ in $G$ where \[c = \frac{\epsilon}{2}{l \choose k}^{-1}.\]
\end{proof}

Similarly, the following theorem is an extension from the same result for undirected hypergraphs, and the proof is an adaptation of the one found in \cite{keevash2011}.

\begin{theorem}
\label{blowup}
Let $F$ be a GDH on $k$ vertices and let $t=(t_1,\ldots,t_k)$ be an $k$-tuple of positive integers. Then $\pi_T(F) = \pi_T(F(t))$.
\end{theorem}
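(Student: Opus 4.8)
The plan is to prove the two inequalities $\pi_T(F(t)) \leq \pi_T(F)$ and $\pi_T(F) \leq \pi_T(F(t))$ separately. The first is almost immediate: since $F$ embeds into $F(t)$ via the map sending each vertex $x_i$ to one of its copies $x_{i1}$, any GDH $G$ that contains a copy of $F(t)$ automatically contains a copy of $F$. Hence every $F$-free GDH is $F(t)$-free, which gives $\mathrm{ex}_T(n,F) \leq \mathrm{ex}_T(n,F(t))$ for all $n$, and passing to the limit yields $\pi_T(F) \leq \pi_T(F(t))$.

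The substantive direction is $\pi_T(F(t)) \leq \pi_T(F)$, and the tool for it is the Supersaturation theorem (Theorem~\ref{supersaturation}). Suppose for contradiction that $\pi_T(F(t)) < \pi_T(F)$ is false, i.e. assume $\pi_T(F(t)) = \pi_T(F) + \delta$ for some $\delta > 0$; more precisely, fix $\epsilon$ with $0 < \epsilon < \delta$ and consider, for large $n$, an $F(t)$-free GDH $G$ on $n$ vertices with $d_T(G) \geq \pi_T(F) + \epsilon$ (such $G$ exist for infinitely many $n$ by the definition of $\pi_T(F(t))$). By Supersaturation applied to $F$ (with parameter $\epsilon$), $G$ contains at least $c\binom{n}{k}$ copies of $F$, where $k = |V_F|$ and $c = c(F,\epsilon) > 0$. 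The goal is to extract from this abundance of copies of $F$ a single copy of the blowup $F(t)$, contradicting $F(t)$-freeness.

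To do that I would set up an auxiliary $k$-uniform (undirected) hypergraph $\mathcal{H}$ on vertex set $V_G$ whose edges are precisely the $k$-subsets of $V_G$ that support a copy of $F$ — recording, if needed, the specific bijection or at least noting that the number of copies spanning each such $k$-set is bounded by $k!$, so $\mathcal{H}$ still has $\Omega(n^k)$ edges. The classical fact (a consequence of the Erd\H{o}s--Simonovits / Erd\H{o}s theorem on supersaturation for complete multipartite hypergraphs, exactly as used in the undirected hypergraph proof in \cite{keevash2011}) is that a $k$-uniform hypergraph with $\Omega(n^k)$ edges contains a complete $k$-partite $k$-uniform subhypergraph with each part of size $s$, for any fixed $s$, once $n$ is large enough. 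Taking $s = \max_i t_i$, this gives a large ``blown-up'' structure inside $\mathcal{H}$; within it, by a pigeonhole / Ramsey-type argument on the finitely many ways a copy of $F$ can sit on a transversal $k$-set (there are at most $k!$ relevant injections, and only $|J_T|$-many give the same edge set), one can find sub-parts of sizes $t_1,\ldots,t_k$ along which all the transversal copies of $F$ are ``aligned'' in the same pattern. Aligned copies of $F$ on a complete $k$-partite skeleton are exactly what it means to have a copy of $F(t)$ inside $G$ — each edge $(x_{i_1},\ldots,x_{i_r})$ of $F$ lifts to all edges $(x_{i_1 j_1},\ldots,x_{i_r j_r})$ with $j_a$ ranging over the chosen part — contradiction.

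The main obstacle is this last alignment step: in the undirected case a complete $k$-partite subhypergraph of $\mathcal{H}$ is literally a blowup, but here each $k$-set carrying ``a copy of $F$'' may carry it via several different injective homomorphisms, and different $k$-sets in the $k$-partite skeleton may use incompatible ones, so the naive $k$-partite structure need not be a copy of $F(t)$. The fix is to color each transversal $k$-set by which of the (at most $k!$) injections realizes a copy of $F$ on it, and invoke the hypergraph Ramsey theorem (or simply iterate the supersaturation/Kővári–Sós–Turán bound inside each color class) to pass to monochromatic sub-parts of the required sizes $t_1,\ldots,t_k$; on those sub-parts all transversal copies of $F$ respect a common vertex-labeling, which is precisely the blowup $F(t)$. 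I would present the first (easy) inequality in full and then carry out the Supersaturation argument, citing \cite{keevash2011} for the standard degenerate-hypergraph extremal fact and spelling out only the coloring/alignment adaptation that is special to the directed setting.
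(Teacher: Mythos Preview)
Your proposal is correct and follows essentially the same route as the paper: the easy inequality via $F \subseteq F(t)$, then supersaturation to get $\Omega(n^k)$ copies of $F$, an auxiliary $k$-uniform hypergraph to locate a large complete $k$-partite substructure, and finally a coloring/Ramsey step to align the embeddings and extract $F(t)$. The paper's argument is exactly this, with the same alignment issue resolved the same way.
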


\begin{proof}
That $\pi_T(F) \leq \pi_T(F(t))$ is trivial since $F(t)$ contains a copy of $F$ so any $F$-free GDH is automatically $F(t)$-free.

Therefore, we only need to show that $\pi_T(F) \geq \pi_T(F(t))$. Suppose not, then for sufficiently large $n$ there exists some $F(t)$-free GDH $G$ on $n$ elements with edge density strictly greater than $\pi_T(F)$. By supersaturation this implies that $G$ contains $c{n \choose k}$ copies of $F$.

Define $G^*$ to be the $k$-uniform hypergraph where $V_{G^*} = V_{G}$ and $\{a_1,\ldots,a_k\} \in E_{G^*}$ iff and only if $\{a_1,\ldots,a_k\}$ contains a copy of $F$ in $G$. Since the edge density of $G^*$ is $c>0$, then for large enough $n$, $G^*$ must contain an arbitrarily large complete $k$-partite subgraph.

For each edge $F$ maps to the vertices in at least one out of $k!$ total possible ways to make an injective homomorphism in $G$. Therefore, by Ramsey Theory, if we take the parts of this complete $k$-partite subgraph large enough and color the edges by the finite number of non-isomorphic ways that $F$ could possibly map to the $k$ vertices, we will get an arbitrarily large monochromatic $k$-partite subgraph where each part has $t$ vertices. This must have been a copy of $F(t)$ in $G$, a contradiction.
\end{proof}

The fact that the Tur\'{a}n density of a blowup equals the Tur\'{a}n density of the original GDH leads to the following nice characterization of degenerate families of GDH - those families with Tur\'{a}n density zero.

\begin{theorem}[Characterization of Degenerate GDH]
\label{degenerate}
Let $\mathcal{F}$ be some family of GDHs, then $\pi_T(\mathcal{F}) = 0$ if and only if some member $F \in \mathcal{F}$ is a subGDH of the $t$-blowup of a single edge for some vector, $t=(t_1,\ldots,t_r)$, of positive integers. Otherwise, $\pi(\mathcal{F}) \geq \frac{m_T}{r^r}$.
\end{theorem}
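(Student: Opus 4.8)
The plan is to prove the two directions of the stated equivalence separately, and to obtain the ``otherwise'' clause as a byproduct of proving the harder direction by contraposition.

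For the implication ``$\Leftarrow$'', suppose some $F \in \mathcal{F}$ is a subGDH of $K(t)$, where $K$ is the single-edge $T$-graph on $r$ vertices (so $E_K$ is one equivalence class, the $J_T$-orbit of $(x_1,\ldots,x_r)$) and $t=(t_1,\ldots,t_r)$. No nonempty $T$-graph is $K$-free, so $\text{ex}_T(n,K)=0$ for $n\geq r$ and hence $\pi_T(K)=0$; by Theorem~\ref{blowup}, $\pi_T(K(t))=\pi_T(K)=0$. Since $K(t)$ contains a copy of $F$, every $F$-free $T$-graph is $K(t)$-free, so $\text{ex}_T(n,F)\leq \text{ex}_T(n,K(t))$, and since $\mathcal{F}$-free graphs are $F$-free we get $\pi_T(\mathcal{F})\leq \pi_T(\{F\})\leq \pi_T(\{K(t)\})=0$.

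For the converse I would argue the contrapositive: assuming no member of $\mathcal{F}$ is a subGDH of any blowup of a single edge, I will exhibit $\mathcal{F}$-free $T$-graphs of density approaching $m_T/r^r$. The key reformulation is that ``$F$ is a subGDH of some $K(t)$'' is equivalent to the existence of a map $\phi:V_F\to\{1,\ldots,r\}$ such that for every edge $(a_1,\ldots,a_r)\in E_F$ the tuple $(\phi(a_1),\ldots,\phi(a_r))$ lies in the $J_T$-orbit of $(1,\ldots,r)$ — i.e., an $r$-colouring of $V_F$ that turns every edge into a correctly oriented transversal — and that one may always take $t=(|V_F|,\ldots,|V_F|)$ after padding small or empty colour classes. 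Now let $G_n = K(s_1,\ldots,s_r)$ with each $s_i\in\{\lfloor n/r\rfloor,\lceil n/r\rceil\}$, the balanced complete $r$-partite $T$-graph on $n$ vertices. If some $F\in\mathcal{F}$ admitted an injective homomorphism $\psi:V_F\to V_{G_n}$, then composing $\psi$ with the projection $V_{G_n}\to\{1,\ldots,r\}$ that records the part of a vertex would yield exactly a colouring $\phi$ of the kind above (the blowup definition forces the induced orientation into the $J_T$-orbit), contradicting the assumption. Hence $G_n$ is $\mathcal{F}$-free, so $\text{ex}_T(n,\mathcal{F})\geq e_T(G_n)=\prod_i s_i\geq \lfloor n/r\rfloor^r$; dividing by $\frac{r!}{m_T}\binom{n}{r}\sim n^r/m_T$ and letting $n\to\infty$ gives $\pi_T(\mathcal{F})\geq m_T/r^r>0$. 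This proves the forward implication and the ``otherwise'' bound at once.

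The density estimate and the monotonicity/limit bookkeeping are routine. The one step needing genuine care is the equivalence between ``$F$ embeds into a complete $r$-partite $T$-graph'' and ``$F$ is a subGDH of a blowup of a single edge'': this requires unwinding the definitions of \emph{contains a copy} (injective homomorphism), \emph{subGDH}, and the \emph{$t$-blowup} simultaneously, and in particular verifying the orientation bookkeeping — that when an edge of $F$ lands in a transversal of $G_n$, its image is automatically one of the $m_T$ tuples constituting that transversal's edge, precisely because those tuples form the $J_T$-orbit. Managing the quantifier ``for some $t$'' uniformly (reducing to the single balanced blowup $K(|V_F|,\ldots,|V_F|)$ and padding colour classes to keep every $t_i$ positive) is the other small point of care.
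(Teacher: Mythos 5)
Your proposal is correct and follows essentially the same route as the paper: the forward direction comes from observing that the balanced blowups of a single edge form an $\mathcal{F}$-free sequence with density tending to $m_T/r^r$, and the reverse direction comes from $\pi_T$ of a single edge being $0$ together with Theorem~\ref{blowup}. Your version is slightly more careful on two points the paper glosses over — passing from $F$ being a \emph{subGDH} of $K(t)$ to the bound $\pi_T(\mathcal{F})\leq\pi_T(\{K(t)\})$, and the colouring reformulation of ``embeds into a blowup of one edge'' — but these are refinements of, not departures from, the paper's argument.
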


\begin{proof}
Suppose that no member of $\mathcal{F}$ is such a blowup. Then no member is contained in the $(t,t,\ldots,t)$-blowup of $S$. Let $S(t)$ stand for this blowup, then the sequence of GDHs, $\{S(t)\}_{t=1}^{\infty}$, is an $\mathcal{F}$-free sequence. The density of any such $S(t)$ is \[d_T(S(t)) = \frac{t^r}{\frac{r!}{m_T} {tr \choose r}} = \frac{m_T t^r (tr-r)!}{(tr)!}.\] These densities tend to $\frac{m_T}{r^r}$ as $t$ increases. Therefore, \[\pi_T(\mathcal{F}) \geq  \frac{m_T}{r^r} > 0.\]

Conversely, suppose some $F \in \mathcal{F}$ is a $(t_1,\ldots,t_r)$-blowup of a single edge. By Theorem~\ref{blowup}, $\pi_T(F) = \pi_T(S) = 0$ since $\text{ex}_T(n,S)$=0 for all $n$. Therefore, $\pi_T(\mathcal{F})=0$.
\end{proof}

\section{Jumps}

Now we turn to the issue of finding jumps and nonjumps for GDH theories. The definition of a jump for undirected hypergraphs extends naturally to this setting as does the important connection between jumps and blowup densities.

\begin{definition}
Let $T$ be a GDH theory, then $\alpha \in [0,1)$ is a \emph{jump} for $T$ if there exists a $c >0$ such that for any $\epsilon > 0$ and any positive integer $l$, there exists a positive integer $n_0(\alpha, \epsilon, l)$ such that any GDH $G$ on $n \geq n_0$ elements that has at least $(\alpha + \epsilon) \frac{r!}{m_T} {n \choose r}$ edges contains a subGDH on $l$ elements with at least $(\alpha + c) \frac{r!}{m_T} {l \choose r}$ edges.
\end{definition}

Note that by Theorem~\ref{degenerate} every $\alpha \in \left[0,\frac{m_T}{r^r}\right)$ is a jump for any $r$-ary GDH theory $T$. This generalizes the well-known result of Erd\H{o}s \cite{erdos1964} that every $\alpha \in [0,\frac{r!}{r^r})$ is a jump for $r$-graphs. The following important theorem on jumps for GDH theories was originally shown by Frankl and R\"{o}dl \cite{frankl1984} for undirected hypergraphs. Their proof works equally well in this setting so the differences here are in name only.

\begin{theorem}
\label{mainguy}
The GDH theory $T$ has a jump $\alpha$ if and only if there exists a finite family $\mathcal{F}$ of GDHs such that $\pi_T(\mathcal{F}) \leq \alpha$ and $b_T(F) > \alpha$ for each $F \in \mathcal{F}$.
\end{theorem}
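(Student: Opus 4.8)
The plan is to mimic the Frankl–Rödl argument, which establishes the equivalence by two separate implications, and to check that every ingredient it uses is already available for GDHs from the earlier sections (supersaturation, blowup-invariance of Turán density, and the degenerate-family characterization). I would first dispose of the easy direction: suppose such a finite family $\mathcal{F}$ exists with $\pi_T(\mathcal{F}) \leq \alpha$ and $b_T(F) > \alpha$ for each $F \in \mathcal{F}$. Pick $c > 0$ with $b_T(F) \geq \alpha + 2c$ for all $F \in \mathcal{F}$ (finitely many, so this is legitimate). Given $\epsilon > 0$ and $l$, take a large GDH $G$ on $n$ elements with at least $(\alpha + \epsilon)\frac{r!}{m_T}\binom{n}{r}$ edges. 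Since $\alpha + \epsilon > \pi_T(\mathcal{F})$, $G$ is not $\mathcal{F}$-free for $n$ large, so $G$ contains some $F \in \mathcal{F}$; by Theorem~\ref{supersaturation} it in fact contains $\Theta(n^{|V_F|})$ copies. Now build the auxiliary $|V_F|$-uniform hypergraph on $V_G$ recording which $|V_F|$-sets carry a copy of $F$, observe its density is bounded below by a positive constant, and extract (via the complete-multipartite/Ramsey step used in the proof of Theorem~\ref{blowup}) a large blowup $F(s,\ldots,s)$ of $F$ inside $G$ on as many vertices as we like. Choosing $s$ large enough that $F(s,\ldots,s)$ has at least $l$ vertices and edge density within $c$ of $b_T(F) \geq \alpha + 2c$ — this is exactly what the definition of $b_T$ as the supremal blowup density gives us — yields an $l$-vertex subGDH of density at least $\alpha + c$. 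Hence $\alpha$ is a jump.

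For the converse, suppose $\alpha$ is a jump with constant $c$; I want to produce the finite family. The idea is to take $\mathcal{F}$ to be the set of all GDHs $F$ on some fixed number $l$ of vertices with $e_T(F) \geq (\alpha + c)\frac{r!}{m_T}\binom{l}{r}$, where $l$ is chosen large enough (depending on $\alpha$, $c$, $r$, $m_T$) that the arithmetic below works. This family is finite since there are only finitely many GDHs on $l$ vertices. Each such $F$ has $b_T(F) \geq d_T(F) \geq \alpha + c > \alpha$, because putting mass $1/l$ on each vertex in the simplex already witnesses $m_T p_F(x) \geq \frac{m_T}{l^r}\cdot e_T(F)\cdot\frac{r!}{\text{(stuff)}}$ — more cleanly, the balanced blowup of $F$ has density tending to $d_T(F)$, so $b_T(F) \geq d_T(F)$. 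It remains to show $\pi_T(\mathcal{F}) \leq \alpha$: i.e. for every $\delta > 0$, every sufficiently large $\mathcal{F}$-free GDH has density at most $\alpha + \delta$. Suppose not — then for every $n_0$ there is an $\mathcal{F}$-free $G$ on $n \geq n_0$ elements with density $\geq \alpha + \delta$. Apply the jump hypothesis with $\epsilon = \delta$ and the chosen $l$: $G$ must contain an $l$-vertex subGDH $H$ with at least $(\alpha + c)\frac{r!}{m_T}\binom{l}{r}$ edges. But then $H \in \mathcal{F}$, contradicting $\mathcal{F}$-freeness. Therefore $\pi_T(\mathcal{F}) \leq \alpha$, completing the converse.

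The one subtlety that needs care — and I expect it to be the main obstacle — is the blowup-extraction step in the easy direction: turning "$G$ has many copies of $F$" into "$G$ contains a genuine blowup $F(s,\ldots,s)$ with $s$ as large as we like." This is precisely the argument already carried out in the proof of Theorem~\ref{blowup}: form the hypergraph $G^*$ of "good" $|V_F|$-sets, use that its positive density forces an arbitrarily large complete $|V_F|$-partite subhypergraph, then color the tuples of that subhypergraph by the (finitely many) isomorphism types of how $F$ can embed and apply hypergraph Ramsey to get a monochromatic complete multipartite piece, which is a blowup of $F$. I would simply cite that proof rather than repeat it, noting only that here we additionally need the parts large enough that the resulting blowup's density exceeds $\alpha + c$, which follows from the definition of $b_T(F)$ as a supremum over the simplex approached by balanced blowups. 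Everything else — the supersaturation input, the finiteness of $\mathcal{F}$, the $d_T(F) \leq b_T(F)$ inequality, and the purely arithmetic choice of $l$ — is routine given the earlier results.
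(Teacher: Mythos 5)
Your overall architecture is the same as the paper's: in one direction take $\mathcal{F}$ to be all $l$-vertex GDHs above the density threshold $\alpha+c$, and in the other direction run supersaturation plus the Ramsey/blowup-extraction step from Theorem~\ref{blowup} to pull a large blowup of some $F\in\mathcal{F}$ out of a dense host. However, two of your intermediate claims are false as stated, and one of them hides the step that actually requires an argument.

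First, in the ``family $\Rightarrow$ jump'' direction you write that since $d_T(G)\ge \alpha+\epsilon>\pi_T(\mathcal{F})$, $G$ contains some $F\in\mathcal{F}$ and ``by Theorem~\ref{supersaturation} it in fact contains $\Theta(n^{|V_F|})$ copies.'' Theorem~\ref{supersaturation} is stated for a single forbidden $F$ and requires $d_T(G)\ge \pi_T(F)+\epsilon$; but $\pi_T(\mathcal{F})\le\alpha$ only bounds the Tur\'{a}n density of the \emph{family}, and each individual $\pi_T(F)$ can be far larger than $\alpha+\epsilon$ (forbidding more graphs lowers the Tur\'{a}n density). So the hypothesis of the theorem you cite need not hold for any single member of $\mathcal{F}$. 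What is needed is the family version: choose $l$ with $\text{ex}_T(l,\mathcal{F})<(\alpha+\tfrac{\epsilon}{2})\frac{r!}{m_T}\binom{l}{r}$, deduce that at least $\tfrac{\epsilon}{2}\binom{n}{l}$ of the $l$-sets of $G$ contain \emph{some} member of $\mathcal{F}$, and then pigeonhole over the finitely many members to get $\Omega(n^{|V_{F_i}|})$ copies of one fixed $F_i$. This is exactly the inline argument in the paper, and it is the step your appeal to the single-graph theorem skips.

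Second, the inequality $b_T(F)\ge d_T(F)$ is false: for a single edge $S$ one has $d_T(S)=1$ but $b_T(S)=m_T/r^r$. The balanced blowup $F(s,\ldots,s)$ of an $l$-vertex $F$ has density tending to $m_T\,p_F(1/l,\ldots,1/l)=m_T e_T(F)/l^r$, not to $d_T(F)=m_T e_T(F)/\bigl(l(l-1)\cdots(l-r+1)\bigr)$, and the former is strictly smaller. Your conclusion in the converse direction survives because $m_T e_T(F)/l^r \ge (\alpha+c)\frac{r!}{l^r}\binom{l}{r}\to\alpha+c$ as $l\to\infty$, so choosing $l$ large (as you hedge) does give $b_T(F)>\alpha$ --- this is precisely the arithmetic condition $\binom{l}{r}(\alpha+c)>\alpha\frac{l^r}{r!}$ the paper imposes --- but the justification you wrote is not the reason. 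The same confusion recurs in the forward direction, where you assert that $F(s,\ldots,s)$ has density within $c$ of $b_T(F)$: balanced blowups approach $m_T p_F(1/|V_F|,\ldots,1/|V_F|)$, which may be well below $b_T(F)$ when the maximizer on the simplex is non-uniform. To get density near $b_T(F)$ you must pass from the balanced blowup that the Ramsey step produces to a sub-blowup $F(t_1,\ldots,t_{v})$ whose part ratios approximate the optimal point of $S^{v}$; that is a one-line repair, but as written the claim is wrong.
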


\begin{proof}
Let $\alpha$ be a jump and let $c$ be the supremum of all corresponding ``lengths" $c$ to the jump. Fix a positive integer $k$ so that \[{k \choose r} \left(\alpha + \frac{c}{2}\right) > \alpha \frac{k^r}{r!}.\] Let $\mathcal{F}$ be the family of all GDHs on $k$ elements with at least $\left(\alpha + \frac{c}{2}\right) {k \choose r} \frac{r!}{m_T}$ edges. Then $\pi_T(\mathcal{F}) \leq \alpha$ since any slightly larger density implies arbitrarily large subsets with density $\alpha + c$. This in turn would imply the existence of a $k$-subset with density at least $\alpha + c$. This $k$-subset would include some member of $\mathcal{F}$. On the other hand, a given $F \in \mathcal{F}$ will have blowup density \[b_T(F) \geq m_T p_F\left(\frac{1}{k},\ldots,\frac{1}{k}\right)>\alpha.\]

Conversely, suppose that such a finite family $\mathcal{F}=\{F_1,\ldots,F_k\}$ exists. Let $\epsilon >0$ and let $\{G_n\}$ be an infinite sequence of GDHs with density that tends to $\alpha + \epsilon$. As in the proof of Theorem~\ref{supersaturation}, for any positive integer $l$, $G_n$ must contain at least $\frac{\epsilon}{2}{n \choose l}$ $l$-subsets with density at least $\alpha + \frac{\epsilon}{2}$.

Let $l$ be large enough so that any GDH on $l$ vertices with density at least $\alpha + \frac{\epsilon}{2}$ contains some $F_i$ from $\mathcal{F}$. Therefore, any $G_n$ with $n > l$ contains $\frac{\epsilon}{2}{n \choose l}$ $l$-sets each with some $F_i$. Since there are only $k$ members of $\mathcal{F}$, then this implies that at least $\frac{\epsilon}{2k}{n \choose l}$ $l$-sets contain the same $F_i$.

Let $|V(F_i)|=v_i$. By the proof of Theorem~\ref{supersaturation} this implies that there is some positive constant $b$ such that $G_n$ contains at least $b{n \choose v_i}$ distinct copies of $F_i$. By the proof of Theorem~\ref{blowup} this shows that if $n$ is large enough, then we get a copy of an arbitrarily large $t$-blowup of $F_i$.

Let $c = \min_{F_i \in \mathcal{F}}{b_T(F_i)}$. For some subset $\mathcal{F}' \subseteq \mathcal{F}$, each $F_i \in \mathcal{F}'$ yields an infinite subsequence of $\{G_n\}$ which contains arbitrarily large $t$-blowups of $F_i$. The densities of these blowups all tend to at least $c$. Therefore, for any positive integer $m$, there exists an $m$-set of each $\{G_n\}$ for sufficiently large $n$ with density at least $\alpha + c$. Hence, $\alpha$ is a jump.
\end{proof}

The following proposition is needed to compare jumps between different GDH theories.

\begin{proposition}
\label{thmB}
The GDH theory $T$ has a jump $\alpha$ if and only if there exists some $c > 0$ such that for all families $\mathcal{F}$ of GDHs, either $\pi_T(\mathcal{F}) \leq \alpha$ or $\pi_T(\mathcal{F}) \geq \alpha + c$.
\end{proposition}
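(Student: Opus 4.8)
The plan is to prove both implications directly from the definitions, the only earlier ingredient being the monotonicity extracted from the proof that the Tur\'{a}n density exists: for any family $\mathcal{F}$ the normalized extremal numbers $a_m := \text{ex}_T(m,\mathcal{F}) / \left(\frac{r!}{m_T}{m \choose r}\right)$ are non-increasing in $m$, so $a_m \geq \pi_T(\mathcal{F})$ for every $m$. I expect that a single constant $c$ will work for both directions.

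For the forward implication, assume $\alpha$ is a jump with constant $c$, and let $\mathcal{F}$ be any family with $\pi_T(\mathcal{F}) > \alpha$, say $\pi_T(\mathcal{F}) = \alpha + \delta$ with $\delta > 0$. I would fix an arbitrary positive integer $l$, apply the jump property with $\epsilon := \delta/2$ to obtain a threshold $n_0$, and then choose any $n > \max\{n_0, l\}$: an extremal $\mathcal{F}$-free GDH $G_n$ on $n$ vertices has density $a_n \geq \pi_T(\mathcal{F}) > \alpha + \epsilon$, so $G_n$ contains a subGDH on $l$ vertices of density at least $\alpha + c$. That subGDH is still $\mathcal{F}$-free, so $a_l \geq \alpha + c$; letting $l \to \infty$ gives $\pi_T(\mathcal{F}) \geq \alpha + c$, which is the asserted dichotomy.

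For the converse, assume the gap holds with some $c > 0$ (necessarily $c \leq 1 - \alpha$, since $\pi_T(\emptyset) = 1$), fix $\epsilon > 0$ and a positive integer $l$, and suppose toward a contradiction that the jump conclusion fails for this $\epsilon$, $l$, $c$. Then for arbitrarily large $n$ there is a GDH $G_n$ on $n$ vertices of density at least $\alpha + \epsilon$ in which every $l$-element subset spans fewer than $(\alpha + c)\frac{r!}{m_T}{l \choose r}$ edges. Take $\mathcal{F}$ to be the family of \emph{all} GDHs on $l$ vertices of density at least $\alpha + c$; because an injective homomorphism sends distinct edges to distinct edges, each $G_n$ is $\mathcal{F}$-free. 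Hence $a_n \geq \alpha + \epsilon$ for arbitrarily large $n$, so by monotonicity $\pi_T(\mathcal{F}) \geq \alpha + \epsilon > \alpha$, and the gap hypothesis upgrades this to $\pi_T(\mathcal{F}) \geq \alpha + c$. Then $a_l \geq \pi_T(\mathcal{F}) \geq \alpha + c$, i.e.\ some $\mathcal{F}$-free GDH on $l$ vertices has density at least $\alpha + c$ --- but such a GDH lies in $\mathcal{F}$, and no member of $\mathcal{F}$ is $\mathcal{F}$-free. This contradiction shows $\alpha$ is a jump with constant $c$.

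The argument is essentially bookkeeping with the monotone sequence $(a_m)$; the only genuinely delicate point is the self-referential choice of $\mathcal{F}$ in the converse --- the family of \emph{all} $l$-vertex GDHs of density at least $\alpha + c$ --- since it is precisely this choice that converts the ``no gap'' hypothesis into a contradiction, together with keeping the single constant $c$ threaded consistently through the two directions.
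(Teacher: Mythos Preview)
Your proof is correct and follows essentially the same approach as the paper. Both directions use the identical ingredients: in the forward direction you (like the paper) feed extremal $\mathcal{F}$-free GDHs into the jump definition to extract dense $l$-subsets that are still $\mathcal{F}$-free; in the converse you build the very same family $\mathcal{F}$ of all $l$-vertex GDHs of density at least $\alpha+c$ and exploit monotonicity of the normalized extremal numbers. The only cosmetic difference is that the paper argues the converse by contrapositive (assuming $\alpha$ is not a jump and exhibiting, for each $c$, a family with $\alpha<\pi_T(\mathcal{F})<\alpha+c$), whereas you argue it directly (assuming the gap and deriving a contradiction from $a_l\geq\alpha+c$); the two arguments unwind to the same observation that no $\mathcal{F}$-free GDH on $l$ vertices can have density at least $\alpha+c$.
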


\begin{proof}
Let $\alpha$ be a jump for $T$ and let $c>0$ be some corresponding ``length" to the jump. Suppose that $\mathcal{F}$ is a finite family of GDHs of type $T$ for which $\alpha < \pi_T(\mathcal{F}) < \alpha + c$. Let $\{G_n\}$ be a sequence of extremal $\mathcal{F}$-free GDHs. For each positive integer $k$ there exists some $G_n$ that contains a $k$-subset with at least $(\alpha + c){k \choose r} \frac{r!}{m_T}$ edges. Take the sequence of these subsets. They are all $\mathcal{F}$-free by assumption, and the limit of their densities is at least $\alpha + c$. Therefore, $\pi_T(\mathcal{F}) \geq \alpha + c$, a contradiction.

Conversely, assume that $\alpha$ is not a jump. Let $c>0$, then for some $0<\epsilon<c$ and some positive integer $l$, there exists an infinite sequence of GDHs, $\{G_n\}$ for which each GDH has density at least $\alpha + \epsilon$ and all $l$-sets have strictly less than $(\epsilon + c){l \choose r}\frac{r!}{m_T}$ edges. Hence, $\{G_n\}$ is $\mathcal{F}$-free where $\mathcal{F}$ is the set of all $l$-GHDs with at least $(\alpha + c){l \choose r} \frac{r!}{m_T}$ edges. So $\pi_T(\mathcal{F}) \geq \alpha + \epsilon$. Since any GDH with density at least $\alpha + c$ must have an $l$-set with density at least $\alpha + c$, then $\pi_T(\mathcal{F}) < \alpha + c$.
\end{proof}

We will now look at how jumps are related between two different GDH theories for some fixed edge size $r$. We will see that in general jumps always ``pass up" the subgroup lattice. That is, if $J_{T'} \subseteq J_T$ for GDH theories $T'$ and $T$, then a jump for $T'$ is a jump for $T$. The converse is not true in general. In fact, for any GDH theories $T'$ and $T$ with $J_{T'} \subseteq J_T$ such that the order of $J_T$ is at least three times that of $J_{T'}$ we will show that the set of jumps for $T'$ is not equal to the set of jumps for $T$. The case where $m_T = 2m_{T'}$ is open.

\subsection{Jumps pass up the lattice}

First, we will show that for GDH theories $T$ and $T'$ with $J_{T'} \subseteq J_T$ the set of Tur\'{a}n densities of forbidden families of $T$-graphs is a subset of the set of Tur\'{a}n densities for $T'$.

\begin{theorem}
\label{thmC}
Let $T$ and $T'$ be two GDH theories such that $J_{T'} \subseteq J_{T}$. Then for any family $\mathcal{F}$ of $T$-graphs there exists a family $\mathcal{F}'$ of $T'$-graphs for which $\pi_{T'}(\mathcal{F}') = \pi_{T}(\mathcal{F})$. Moreover, if $\mathcal{F}$ is a finite family, then $\mathcal{F}'$ is also finite.
\end{theorem}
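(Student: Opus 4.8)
The plan is to obtain $\mathcal{F}'$ from $\mathcal{F}$ by replacing each forbidden $T$-graph with all of its ``refinements'' as a $T'$-graph. Since $J_{T'}\subseteq J_T$, every edge of a $T$-graph $F$ (a single $J_T$-orbit of $r$-tuples with distinct entries, hence of size $m_T$) splits into exactly $m_T/m_{T'}$ cosets of $J_{T'}$, each of which is a valid $T'$-edge on $V_F$. For $F\in\mathcal{F}$ let $\mathcal{F}'_F$ be the collection of $T'$-graphs $F'$ on $V_F$ obtained by selecting, independently for each edge of $F$, one of these $m_T/m_{T'}$ cosets; equivalently, in the language introduced before Proposition~\ref{containing}, $\mathcal{F}'_F$ is the set of $T'$-graphs having precisely one edge inside each edge of $F$ and whose minimum $T$-container is $F$. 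Put $\mathcal{F}'=\bigcup_{F\in\mathcal{F}}\mathcal{F}'_F$. Since $|\mathcal{F}'_F|=(m_T/m_{T'})^{e_T(F)}$, the family $\mathcal{F}'$ is finite as soon as $\mathcal{F}$ is, which will yield the ``moreover'' clause.

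Two passages between the theories drive the argument. Given a $T$-graph $H$, let $H^{\uparrow}$ be the $T'$-graph on $V_H$ whose edges are all $T'$-edges contained in edges of $H$; using $e_{T'}(H^{\uparrow})=\tfrac{m_T}{m_{T'}}e_T(H)$ together with the density formula one checks $d_{T'}(H^{\uparrow})=d_T(H)$ exactly. Conversely, given a $T'$-graph $G'$, let ${G'}^{\downarrow}$ be the $T$-graph on $V_{G'}$ whose edges are the $J_T$-orbits generated by the edges of $G'$; each such orbit contains between $1$ and $m_T/m_{T'}$ edges of $G'$, so $e_T({G'}^{\downarrow})\le e_{T'}(G')\le\tfrac{m_T}{m_{T'}}e_T({G'}^{\downarrow})$, hence $d_{T'}(G')\le d_T({G'}^{\downarrow})$.

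For $\pi_{T'}(\mathcal{F}')\ge\pi_T(\mathcal{F})$ I would show that if $H$ is $\mathcal{F}$-free then $H^{\uparrow}$ is $\mathcal{F}'$-free: an injective homomorphism $\psi\colon V_{F'}\to V_{H^{\uparrow}}$ with $F'\in\mathcal{F}'_F$ sends each edge of $F'$ into an edge of $H^{\uparrow}$, hence into an edge of $H$ after passing to $J_T$-orbits, and since the $J_T$-orbits of the edges of $F'$ are exactly the edges of $F$, the same $\psi$ witnesses a copy of $F$ in $H$ — a contradiction. Taking $H$ extremal $\mathcal{F}$-free on $n$ vertices and using $d_{T'}(H^{\uparrow})=d_T(H)$ gives $\text{ex}_{T'}(n,\mathcal{F}')\big/\big(\tfrac{r!}{m_{T'}}\binom{n}{r}\big)\ge\text{ex}_T(n,\mathcal{F})\big/\big(\tfrac{r!}{m_T}\binom{n}{r}\big)$, and $n\to\infty$ gives the inequality. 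For the reverse inequality I would show that if $G'$ is $\mathcal{F}'$-free then ${G'}^{\downarrow}$ is $\mathcal{F}$-free: if $\psi\colon V_F\to V_{{G'}^{\downarrow}}$ were an injective homomorphism for some $F\in\mathcal{F}$, then for each edge $\bar e$ of $F$ the $T$-edge $\psi(\bar e)$ of ${G'}^{\downarrow}$ would, by construction of the closure, contain some $J_{T'}$-coset $C\subseteq\psi(\bar e)$ lying in $E_{G'}$; picking a representative tuple of $C$ and pulling it back through $\psi$ selects a single $T'$-edge $\bar e'\subseteq\bar e$ on $V_F$ with $\psi$ mapping $\bar e'$ into $G'$. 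Collecting one such $\bar e'$ for every edge of $F$ produces a member $F'$ of $\mathcal{F}'_F$ of which $\psi$ is an injective copy inside $G'$, contradicting $\mathcal{F}'$-freeness. Hence $d_{T'}(G')\le d_T({G'}^{\downarrow})\le\text{ex}_T(n,\mathcal{F})\big/\big(\tfrac{r!}{m_T}\binom{n}{r}\big)$ for $\mathcal{F}'$-free $G'$ on $n$ vertices, and $n\to\infty$ gives $\pi_{T'}(\mathcal{F}')\le\pi_T(\mathcal{F})$. (The two bounds in fact combine to $\text{ex}_{T'}(n,\mathcal{F}')=\tfrac{m_T}{m_{T'}}\text{ex}_T(n,\mathcal{F})$ for every $n$.)

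The step I expect to be the crux is the pullback in the reverse inequality: one must verify that the edge-by-edge choices of a $J_{T'}$-coset inside each edge of $F$ genuinely assemble into one $T'$-graph $F'$ whose minimum $T$-container is exactly $F$ — so that $F'\in\mathcal{F}'$, not merely some $T'$-graph contained in $F$ — and that the single fixed map $\psi$ is simultaneously a homomorphism for all the chosen edges (this uses that $G'\models T'$, i.e.\ $E_{G'}$ is closed under $J_{T'}$). This is pure orbit/coset bookkeeping relying only on $J_{T'}\le J_T$, but it is the part that must be written out carefully; everything else is the averaging and normalisation already used in the proofs of Theorems~\ref{supersaturation} and~\ref{blowup}.
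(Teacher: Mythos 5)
Your proposal is correct and follows essentially the same route as the paper: the same family $\mathcal{F}'$ (all ways of picking one $T'$-edge inside each edge of each $F$), the same upward map replacing each $T$-edge by all $\tfrac{m_T}{m_{T'}}$ contained $T'$-edges for one inequality, and the same downward $J_T$-orbit closure with the edge-by-edge pullback for the other. Your write-up is in fact somewhat more careful than the paper's about verifying that the pulled-back choices assemble into a member of $\mathcal{F}'$, but this is a refinement of detail, not a different argument.
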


\begin{proof}
For each $F \in \mathcal{F}$ let $F_{T'}$ be the set of all $T'$-graphs that have exactly one edge contained in every edge of $F$. That is, since $J_{T'} \subseteq J_{T}$, then there are $\frac{m_T}{m_{T'}}$ possible $T'$ edges contained within one $T$ edge. So $F_{T'}$ is a finite set with at most $\left(\frac{m_T}{m_{T'}}\right)^{e_T(F)}$ members. Let \[\mathcal{F}' = \bigcup_{F \in \mathcal{F}} F_{T'}.\] Then $\mathcal{F}'$ is a family of $T'$-graphs. Moreover, $\mathcal{F}'$ is finite if $\mathcal{F}$ is finite. We want to show that $\pi_{T'}(\mathcal{F}') = \pi_{T}(\mathcal{F})$.

First, let $\{G_n'\}$ be an extremal $\mathcal{F}'$-free sequence of $T'$-graphs. For each $G_n'$ let $G_n$ be the $T$-graph constructed by replacing each $T'$-edge of $G_n'$ with its containing $T$-edge (multiple $T'$-edges could correspond to the same $T$-edge but each $T$-edge can only be added once).

The sequence $\{G_n\}$ is $\mathcal{F}$-free since otherwise some $G_n$ contains some $F \in \mathcal{F}$ which means that $G_n'$ must have contained at least one member of $F_{T'}$. Therefore, \[\pi_{T}(\mathcal{F}) \geq \lim_{n \rightarrow \infty} d_T(G_n) \geq \lim_{n \rightarrow \infty} \frac{\frac{m_{T'}}{m_T} e_{T'}(G_n')}{\frac{r!}{m_T}{n \choose r}} = \lim_{n \rightarrow \infty} \frac{ex_{T'}(n,\mathcal{F}')}{\frac{r!}{m_{T'}}{n \choose r}} = \pi_{T'}(\mathcal{F}').\]

Conversely, now let $\{G_n\}$ be an extremal $\mathcal{F}$-free sequence of $T$-graphs. For each $G_n$ construct a $T'$-graph $G_n'$ by replacing each $T$-edge with all $\frac{m_T}{m_{T'}}$ $T'$-edges contained in it. The sequence $\{G_n'\}$ is $\mathcal{F}'$-free with $\frac{m_T}{m_{T'}}\text{ex}_T(n,\mathcal{F})$ edges. Therefore, \[\pi_{T'}(\mathcal{F}') \geq \lim_{n \rightarrow \infty} \frac{\frac{m_T}{m_{T'}}\text{ex}_T(n,\mathcal{F})}{\frac{r!}{m_{T'}} {n \choose r}} = \pi_T(\mathcal{F}).\] So $\pi_{T'}(\mathcal{F}') = \pi_{T}(\mathcal{F})$.
\end{proof}

The converse of Theorem~\ref{thmC} is false in general. For example, the permutation subgroup for the theory $T'$ of ($2 \rightarrow 1$)-uniform directed hypergraphs is a subgroup of the permutation group for the theory $T$ of undirected $3$-graphs, $S_3$. The extremal number for the directed hypergraph is $F = \{ab \rightarrow c, cd \rightarrow e\}$ (see Figure~\ref{F}) is \[ex_{T'}(n,F) = \left\lfloor \frac{n}{3} \right\rfloor {\left\lceil \frac{2n}{3} \right\rceil \choose 2}\] as shown in \cite{langlois2010}. Therefore, the Tur\'{a}n density is $\pi_{T'}(F) = \frac{4}{27}$. However, it is well-known that no Tur\'{a}n densities exist for $3$-graphs in the interval $\left(0,\frac{6}{27}\right)$.

\begin{figure}
  \centering
      \begin{tikzpicture}
			\filldraw [black] (-2,2) circle (1pt);
			\filldraw [black] (-2,0) circle (1pt);
			\draw[thick] (-2,2) -- (-2,0);
			\filldraw [black] (0,1) circle (1pt);
			\draw[thick, ->] (-2,1) -- (0,1);
			\filldraw [black] (0,-1) circle (1pt);
			\draw[thick] (0,1) -- (0,-1);
			\draw[thick,->] (0,0) -- (2,0);
			\filldraw [black] (2,0) circle (1pt);
		\end{tikzpicture}
  \caption{$\pi(F) = \frac{4}{27}$}
  \label{F}
\end{figure}
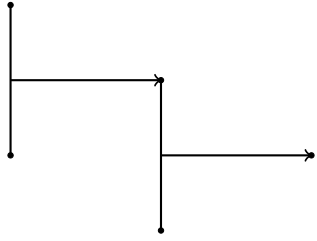

\begin{corollary}
\label{jumpsgoup}
Let $T$ and $T'$ be two GDH theories such that $J_{T'} \subseteq J_{T}$. If $\alpha$ is a jump for $T'$, then it is also a jump for $T$.
\end{corollary}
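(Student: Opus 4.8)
The plan is to deduce the corollary directly from Proposition~\ref{thmB} and Theorem~\ref{thmC}, which together reduce the statement to a purely formal manipulation of Tur\'{a}n densities; no new combinatorial input is needed. In particular, the characterization ``jump $\iff$ a uniform gap in the set of attainable Tur\'{a}n densities'' lets us translate the jump hypothesis for $T'$ into a statement about $T$-families by routing every $T$-family through its associated $T'$-family.

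First I would invoke Proposition~\ref{thmB} for $T'$: since $\alpha$ is a jump for $T'$, there is a constant $c > 0$ such that every family $\mathcal{F}'$ of $T'$-graphs satisfies either $\pi_{T'}(\mathcal{F}') \leq \alpha$ or $\pi_{T'}(\mathcal{F}') \geq \alpha + c$. The goal is to establish the same dichotomy, with the same $c$, for families of $T$-graphs, since by the converse direction of Proposition~\ref{thmB} this is exactly what it means for $\alpha$ to be a jump for $T$.

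Next, given an arbitrary family $\mathcal{F}$ of $T$-graphs, I would apply Theorem~\ref{thmC} (which uses the hypothesis $J_{T'} \subseteq J_T$) to obtain a family $\mathcal{F}'$ of $T'$-graphs with $\pi_{T'}(\mathcal{F}') = \pi_{T}(\mathcal{F})$. Plugging $\mathcal{F}'$ into the dichotomy for $T'$ gives $\pi_T(\mathcal{F}) = \pi_{T'}(\mathcal{F}') \leq \alpha$ or $\pi_T(\mathcal{F}) = \pi_{T'}(\mathcal{F}') \geq \alpha + c$. Since $\mathcal{F}$ was arbitrary, Proposition~\ref{thmB} then yields that $\alpha$ is a jump for $T$, with the same length $c$.

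The argument is essentially obstruction-free; the only points to check are bookkeeping. One must confirm that Proposition~\ref{thmB} is genuinely quantified over \emph{all} families of the relevant type, so that it applies to the $\mathcal{F}'$ manufactured from an arbitrary $\mathcal{F}$ (it is), and that the density equality in Theorem~\ref{thmC} does not require $\mathcal{F}$ to be finite (it does not --- finiteness of $\mathcal{F}$ is only used there to conclude finiteness of $\mathcal{F}'$, which plays no role here). It is also worth remarking that the constant $c$ is preserved, so the jump ``length'' does not degrade when passing up the subgroup lattice.
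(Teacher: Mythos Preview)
Your proof is correct and follows essentially the same approach as the paper's: both rely solely on Proposition~\ref{thmB} and Theorem~\ref{thmC} to transfer the gap in Tur\'{a}n densities from $T'$ to $T$. The only cosmetic difference is that the paper argues the contrapositive (if $\alpha$ is not a jump for $T$, produce a $T'$-family with density in $(\alpha,\alpha+c)$), whereas you argue directly; your observation that the jump length $c$ is preserved is a nice bonus the paper does not state explicitly.
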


\begin{proof}
If $\alpha$ is not a jump for $T$, then for any $c > 0$ there exists by Proposition~\ref{thmB} a family $\mathcal{F}$ such that $\alpha < \pi_T(\mathcal{F}) < \alpha + c$. So by Theorem~\ref{thmC} there exists a family $\mathcal{F}'$ of $T'$-graphs with $\alpha < \pi_{T'}(\mathcal{F}') < \alpha + c$. So $\alpha$ is not a jump for $T'$.
\end{proof}

Corollary~\ref{jumpsgoup} immediately implies that all nonjumps found for $r$-uniform undirected hypergraphs must also be non-jumps for any GDH with an $r$-ary relation. However, the converse is not true in general.

\subsection{Jumps do not pass down the lattice}

Roughly speaking, the current best method of demonstrating that a particular $\alpha$ is not a jump for $r$-uniform hypergraphs is to construct a sequence of hypergraphs each with blowup densities that are strictly larger than $\alpha$ but for which any relatively small subgraph has blowup density at most $\alpha$. This method originated in \cite{frankl1984} and generalizes to GDHs as the following definition and lemma demonstrate.

\begin{definition}
Let $\alpha \in [0,1)$. Call $\alpha$ a \emph{demonstrated nonjump} for a GDH theory $T$ if there exists an infinite sequence of GDHs, $\{G_n\}$, such that $b_T(G_n) > \alpha$ for each $G_n$ in the sequence and for any positive integer $l$ there exists a positive integer $n_0$ such that whenever $n \geq n_0$ then any subGDH $H \subseteq G_n$ on $l$ or fewer vertices has blowup density $b_T(H) \leq \alpha$.
\end{definition}

\begin{lemma}
Every demonstrated nonjump is a nonjump.
\end{lemma}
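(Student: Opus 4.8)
The plan is to show the contrapositive-flavored statement: if $\alpha$ is a demonstrated nonjump witnessed by a sequence $\{G_n\}$, then $\alpha$ fails the definition of a jump. So suppose toward a contradiction that $\alpha$ is a jump with associated constant $c > 0$. The idea is to feed the witnessing sequence $\{G_n\}$ into the jump hypothesis, but $\{G_n\}$ consists of GDHs whose \emph{blowup} density exceeds $\alpha$, not GDHs that literally have $\frac{r!}{m_T}\binom{n}{r}$-edge density exceeding $\alpha + \epsilon$. The fix is to pass to blowups: for each $G_n$, by definition of $b_T$ there is a point in the simplex, hence a sequence of integer vectors $t$, so that the density $d_T(G_n(t))$ converges to $b_T(G_n) > \alpha$ as $\sum t_i \to \infty$. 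Choose $\epsilon > 0$ with, say, $\alpha + 2\epsilon < \min\{b_T(G_n) : \text{finitely many relevant } n\}$ — more carefully, fix a single $n^*$ large enough (to be determined) and then pick a blowup $G_{n^*}(t)$ with density $\geq \alpha + \epsilon$ on as many vertices as we like.

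The key step is then to apply the jump property to this blowup $G_{n^*}(t)$: since it has density at least $\alpha + \epsilon$ and (by taking $\sum t_i$ large) arbitrarily many vertices, for every positive integer $l$ it must contain a subGDH $H$ on $l$ vertices with density at least $\alpha + c$, hence with $b_T(H) \geq d_T(H) \geq \alpha + c > \alpha$. Now I want to transfer this dense small subgraph back down to $G_{n^*}$ itself. Here I use that any subGDH of a blowup $G_{n^*}(t)$ on $l$ vertices is itself a subGDH of the blowup $G_{n^*}(l, l, \ldots, l)$ (just relabel copies), and more usefully: an $l$-vertex subGDH $H$ of $G_{n^*}(t)$ is a subGDH of $G_{n^*}(s)$ where $s$ is the "projection" vector recording how many chosen vertices land in each class of $G_{n^*}$; collapsing each class identifies $H$ as a homomorphic image sitting inside — the cleanest route is: $H \subseteq G_{n^*}(t)$ on $l$ vertices implies $H$ is a subGDH of $G_{n^*}(\underbrace{l,\ldots,l}_{})$, and blowups satisfy $b_T(F(t)) = b_T(F)$ (this follows from Theorem~\ref{blowup} applied with the blowup-density/Tur\'an-density connection, or directly: $p_{F(t)}$ on the simplex attains the same maximum as $p_F$ by grouping coordinates). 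Therefore $\alpha < b_T(H) \leq b_T(G_{n^*})$ — but wait, that is not yet a contradiction since $b_T(G_{n^*}) > \alpha$ was assumed.

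The actual contradiction comes from the \emph{smallness} clause in the demonstrated-nonjump definition: for the chosen $l$, there is $n_0$ so that for all $n \geq n_0$ every subGDH of $G_n$ on at most $l$ vertices has blowup density $\leq \alpha$. So I should order the quantifiers correctly: first fix $l$ arbitrary (say $l$ huge), obtain $n_0 = n_0(l)$ from the nonjump hypothesis, then set $n^* = \max(n_0, \text{whatever the jump's } n_0(\alpha,\epsilon,l) \text{ needs after blowup})$, and blow $G_{n^*}$ up enough that the jump property yields an $l$-vertex subGDH $H$ of $G_{n^*}(t)$ with $b_T(H) \geq \alpha + c$. Since $H$ has at most $l$ vertices and lives in a blowup of $G_{n^*}$, it is (isomorphic to) a subGDH of $G_{n^*}$ on at most $l$ vertices — more precisely, $H$'s vertices occupy at most $l$ classes of $G_{n^*}$, and the induced subGDH $H_0$ of $G_{n^*}$ on those classes satisfies $b_T(H_0) \geq b_T(H)$ because $H$ embeds in a blowup of $H_0$ and blowups do not increase blowup density (Theorem~\ref{blowup} plus $b_T \leq 1$-type monotonicity, or the direct polynomial argument). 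Then $b_T(H_0) \geq \alpha + c > \alpha$ with $|V_{H_0}| \leq l$ and $n^* \geq n_0(l)$ contradicts the demonstrated-nonjump property.

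The main obstacle I anticipate is the bookkeeping around "$H$ lives in a blowup of $G_{n^*}$, therefore corresponds to a small subGDH of $G_{n^*}$ with blowup density at least as large" — making precise that an $l$-vertex subGDH of $G_{n^*}(t)$ projects to an induced subGDH on $\leq l$ classes of $G_{n^*}$ whose blowup density is not smaller. This rests on the observation that $H \subseteq G_{n^*}(t)$ implies $H$ is a subGDH of the blowup $H_0(t')$ of that induced piece $H_0$, combined with the fact $b_T(H_0(t')) = b_T(H_0)$, which is exactly the content of (the blowup-density analogue of) Theorem~\ref{blowup}; I would state and use this as a one-line lemma. Everything else — choosing $\epsilon$, choosing the blowup size to clear the jump's threshold $n_0(\alpha,\epsilon,l)$, and ordering the quantifiers so that $l$ is fixed before $n^*$ — is routine.
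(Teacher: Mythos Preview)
Your argument is correct in outline and reaches the same contradiction, but the paper takes a shorter route by invoking Theorem~\ref{mainguy} (the characterization of jumps via finite families) rather than the raw definition of a jump. Concretely, the paper picks a finite family $\mathcal{F}$ with $\pi_T(\mathcal{F}) \leq \alpha$ and $b_T(F) > \alpha$ for every $F \in \mathcal{F}$, sets $l = \max_{F \in \mathcal{F}} |V_F|$, chooses $n$ large from the demonstrated-nonjump hypothesis, and observes that a sufficiently large blowup of $G_n$ (having density $> \alpha \geq \pi_T(\mathcal{F})$) must contain some $F \in \mathcal{F}$; projecting $F$ down to the at most $l$ classes of $G_n$ it occupies yields $H \subseteq G_n$ with $b_T(F) \leq b_T(H(t)) \leq b_T(H) \leq \alpha$, contradicting $b_T(F) > \alpha$. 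The payoff of going through Theorem~\ref{mainguy} is that the forbidden objects already carry the inequality $b_T(F) > \alpha$, so there is no need to convert edge density of the found subGDH into blowup density --- which is precisely where your write-up slips: the claimed inequality $b_T(H) \geq d_T(H)$ is false in general (uniform weights only give $b_T(H) \geq m_T\, e_T(H)/l^r = d_T(H)\cdot l(l-1)\cdots(l-r+1)/l^r$), so you really do need to fix $l$ large enough that $(\alpha + c)\cdot l(l-1)\cdots(l-r+1)/l^r > \alpha$, not merely ``$l$ arbitrary''. Once that is done your approach goes through; it is more elementary in that it bypasses Theorem~\ref{mainguy}, at the price of the extra density-to-blowup-density conversion and the attendant quantifier bookkeeping you already flagged.
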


\begin{proof}
Suppose not. Assume that $\alpha$ is a demonstrated nonjump but is a jump. Then there exists a finite family of GDHs $\mathcal{F}$ such that $\pi_T(\mathcal{F}) \leq \alpha$ and $b_T(F) > \alpha$ for each $F \in \mathcal{F}$. Let $l$ be the maximum number of vertices over the members of $\mathcal{F}$. Let $n$ be large enough so that any subGDH on $l$ or fewer vertices has blowup density at most $\alpha$. Then some large enough blowup of $G_n$ contains some $F \in \mathcal{F}$ as a subGDH since the blowup density of each $G_n$ tends to something strictly greater than $\alpha$. Let $H$ be the minimal subGDH of $G_n$ for which the corresponding blowup contains this copy of $F$. Since $H$ has at most $l$ vertices, then it has a blowup density at most $\alpha$. Hence, \[b_T(F) \leq b_T(H(t)) \leq b_T(H) \leq \alpha,\] a contradiction.
\end{proof}

We can now show that a demonstrated nonjump for a GDH theory $T$ yields multiple nonjumps of equal and lesser values down the lattice to GDH theories $T'$ for which $J_{T'} \subseteq J_T$.

\begin{theorem}
Let $T$ and $T'$ be GDH theories such that $J_{T'} \subseteq J_{T}$. Let $\alpha$ be a demonstrated nonjump for $T$. Then $\frac{km_{T'}}{m_T} \alpha$ is a demonstrated nonjump for $T'$ for $k = 1, \ldots, \frac{m_T}{m_{T'}}$.
\end{theorem}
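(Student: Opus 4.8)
The plan is to start with a demonstrated-nonjump witness $\{G_n\}$ for $T$ and turn it into a witness for the value $\frac{km_{T'}}{m_T}\alpha$ and theory $T'$, using Proposition~\ref{containing} to control blowup densities as we pass between the two theories. Concretely, for each $G_n$ I would build a $T'$-graph $G_n'$ on the same vertex set by replacing every $T$-edge of $G_n$ with exactly $k$ of the $\frac{m_T}{m_{T'}}$ possible $T'$-edges it contains; this choice should be made uniformly, i.e.\ the same $k$ representatives in each edge, so that $G_n$ is exactly the minimum $T$-container of $G_n'$ and $G_n'$ has precisely $k$ edges inside each edge of $G_n$. By the ``moreover'' clause of Proposition~\ref{containing}, $b_{T'}(G_n') = \frac{km_{T'}}{m_T} b_T(G_n) > \frac{km_{T'}}{m_T}\alpha$, so the sequence $\{G_n'\}$ clears the required blowup-density threshold.

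Next I would verify the small-subgraph condition. Fix a positive integer $l$. Any subGDH $H' \subseteq G_n'$ on at most $l$ vertices sits inside the induced subGDH $H \subseteq G_n$ on the same vertex set, and $H$ is a $T$-container of $H'$ (not necessarily the minimum one, but that only helps). Choosing $n_0$ so that every $\leq l$-vertex subGDH of $G_n$ has $b_T(H) \le \alpha$ for $n \ge n_0$, Proposition~\ref{containing} gives $b_{T'}(H') \le b_T(H'\text{'s minimum }T\text{-container}) \le b_T(H) \le \alpha$; and since $\alpha \le \frac{km_{T'}}{m_T}\alpha$ when $k \ge 1$... wait — here I need $b_{T'}(H') \le \frac{km_{T'}}{m_T}\alpha$, and $\frac{km_{T'}}{m_T}\alpha \le \alpha$ since $k \le \frac{m_T}{m_{T'}}$, so I actually need the \emph{sharper} bound $b_{T'}(H') \le \frac{m_{T'}}{m_T} b_T(H)$-type control rather than $b_{T'}(H') \le b_T(H)$. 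This is the crux: the crude upper bound from Proposition~\ref{containing} is too weak, and I must instead track how many $T'$-edges of $G_n'$ land in each $T$-edge of $H$. Since $G_n'$ has exactly $k$ edges in each edge of $G_n$, any subGDH $H'$ on a vertex set spanning $H$ has at most $k$ (in fact exactly $k$, if $H' = G_n'[V_H]$) edges per edge of $H$, so $p_{H'}(x) \le k\,p_{H}(x)$ pointwise on the simplex, whence $b_{T'}(H') \le \frac{km_{T'}}{m_T} b_T(H) \le \frac{km_{T'}}{m_T}\alpha$. This is the argument I would actually write, and it shows $\frac{km_{T'}}{m_T}\alpha$ is a demonstrated nonjump for $T'$.

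The main obstacle, then, is exactly this bookkeeping: one must be careful that an arbitrary subGDH $H' \subseteq G_n'$ — not just the full induced subgraph $G_n'[V_{H'}]$ — still has at most $k$ edges inside each edge of its minimum $T$-container, so that the pointwise polynomial inequality $p_{H'} \le k\, p_{(\text{min }T\text{-container of }H')}$ holds. This is immediate from the construction (we only ever put $k$ $T'$-edges into each $T$-edge of $G_n$, so no subgraph can have more), but it is the step where the specific uniform choice of representatives matters and where a careless argument using only the generic Proposition~\ref{containing} bound would fail. Once that inequality is in hand, taking maxima over the simplex and invoking $b_T(H) \le \alpha$ finishes the small-subgraph requirement, and combined with the blowup-density lower bound from the previous paragraph we conclude that $\{G_n'\}$ demonstrates that $\frac{km_{T'}}{m_T}\alpha$ is a nonjump for $T'$, for each $k = 1, \ldots, \frac{m_T}{m_{T'}}$.
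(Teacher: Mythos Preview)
Your argument is correct and follows essentially the same construction as the paper: replace each $T$-edge of $G_n$ by exactly $k$ of its $\tfrac{m_T}{m_{T'}}$ contained $T'$-edges, then use the ``moreover'' clause of Proposition~\ref{containing} to scale blowup densities by $\tfrac{km_{T'}}{m_T}$ for both $G_n$ and its small subgraphs. One minor remark: the uniformity you insist on (``the same $k$ representatives in each edge'') is not actually needed---any choice of $k$ distinct $T'$-edges per $T$-edge already guarantees that every subGDH $H'$ has at most $k$ edges in each edge of its minimum $T$-container, which is all the polynomial inequality $p_{H'} \le k\,p_H$ requires; the paper accordingly just says ``in any orientation.''
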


\begin{proof}
Let $\alpha$ be a demonstrated nonjump for $T$. Let $\{G_n\}$ be the corresponding infinite sequence of GDHs. Fix some $k \in \{1,\ldots,\frac{m_T}{m_{T'}}\}$. For each $n$ let $G_n'$ be a $T'$-graph constructed from $G_n$ by replacing each $T$-edge with $k$ $T'$-edges in any orientation. Then by Proposition~\ref{containing} we know that \[b_{T'} (G_n') = \frac{km_{T'}}{m_T}b_T(G_n)\] and any $H' \subseteq G_n'$  corresponding to $H \subseteq G_n$ also gives: \[b_{T'} (H') = \frac{km_{T'}}{m_T}b_T(H).\] Therefore, $b_{T'}(G_n') > \frac{km_{T'}}{m_T} \alpha$ for each $n$ and for any positive integer $l$, there exists a $n_0$ such that $b_{T'}(H) \leq \alpha$ for any subGDH $H \subseteq G_n$ for all $n \geq n_0$.
\end{proof}

Constructions of sequences of undirected $r$-graphs which show that $\frac{5r!}{2r^r}$ is a demonstrated nonjump for each $r \geq 3$ were given in \cite{frankl2007}. This gives the following corollary.

\begin{corollary}
Let $T$ be an $r$-ary GDH theory for $r \geq 3$. Then $\frac{5m_Tk}{2r^r}$ is a nonjump for $T$ for $k=1,\ldots,\frac{r!}{m_T}$.
\end{corollary}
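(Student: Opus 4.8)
The plan is to derive this corollary directly from the preceding theorem together with the cited construction of Frankl, Peng, Rödl, and Talbot. First I would recall that \cite{frankl2007} exhibits, for each $r \geq 3$, an infinite sequence of $r$-uniform undirected hypergraphs witnessing that $\alpha = \frac{5r!}{2r^r}$ is a demonstrated nonjump in the sense defined above: each hypergraph in the sequence has blowup density exceeding $\alpha$, while every subhypergraph on boundedly many vertices has blowup density at most $\alpha$ once $n$ is large. The undirected $r$-graph theory is exactly the GDH theory $T_0$ with $J_{T_0} = S_r$, so $m_{T_0} = r!$; thus $\alpha = \frac{5 m_{T_0}}{2 r^r}$ is a demonstrated nonjump for $T_0$.

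Next I would apply the preceding theorem with $T = T_0$ (so $J_T = S_r$, $m_T = r!$) and $T' = T$ the arbitrary $r$-ary GDH theory from the corollary's statement; note that $J_{T'} \subseteq S_r = J_{T_0}$ holds automatically for every GDH theory since $J_{T'}$ is by definition a subgroup of $S_r$. The theorem then says that $\frac{k\, m_{T'}}{m_{T_0}}\,\alpha = \frac{k\, m_{T}}{r!}\cdot \frac{5 r!}{2 r^r} = \frac{5 m_T k}{2 r^r}$ is a demonstrated nonjump for $T'=T$, for each $k = 1, \ldots, \frac{m_{T_0}}{m_{T'}} = \frac{r!}{m_T}$. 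Finally, by the lemma that every demonstrated nonjump is a nonjump, each of these values is a genuine nonjump for $T$, which is exactly the claimed conclusion.

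There is essentially no obstacle here — the corollary is a bookkeeping consequence of substituting the parameters of the generic result. The only point requiring a word of care is the identification that the $r$-graph sequence of \cite{frankl2007} satisfies the precise ``demonstrated nonjump'' condition as defined in this paper (blowup density of bounded-size subgraphs staying $\leq \alpha$), rather than merely the weaker statement that $\alpha$ is a nonjump; this is standard since the Frankl–Peng–Rödl–Talbot argument proceeds exactly by controlling these subgraph blowup densities, but it is worth stating explicitly so the hypothesis of the theorem is visibly met. Everything else is a direct plug-in of $m_{T_0} = r!$ and the range of $k$.
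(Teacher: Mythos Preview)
Your proposal is correct and follows exactly the paper's approach: take the Frankl--Peng--R\"odl--Talbot construction showing $\frac{5r!}{2r^r}$ is a demonstrated nonjump for the undirected $r$-graph theory $T_0$ (with $m_{T_0}=r!$), apply the preceding theorem with $J_T \subseteq J_{T_0}=S_r$, and read off the values $\frac{5m_Tk}{2r^r}$ for $k=1,\ldots,\frac{r!}{m_T}$. Your remark that one must check the cited construction yields a \emph{demonstrated} nonjump in the paper's sense is a fair caveat, but the paper simply asserts this and you are right that the original argument proceeds precisely by bounding subgraph blowup densities.
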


This in turn shows that the set of jumps for a theory $T'$ is a proper subset of the set of jumps for $T$ for any $T$ such that $J_{T'} \subseteq J_T$ and $m_T \geq 3m_{T'}$.

\begin{corollary}
Let $T$ and $T'$ be $r$-ary GDH theories such that $J_{T'} \subseteq J_{T}$ and $m_T \geq 3m_{T'}$. Then there exists an $\alpha$ that is a nonjump for $T'$ and a jump for $T$.
\end{corollary}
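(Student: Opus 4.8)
The plan is to combine the previous corollary with the fact established earlier that jumps pass up the lattice, so that we only need to exhibit a single value that is a nonjump for $T'$ but lands in the jump region for $T$. Concretely, I would start from the known demonstrated nonjump $\frac{5r!}{2r^r}$ for undirected $r$-graphs (the theory $T_0$ with $J_{T_0} = S_r$, so $m_{T_0} = r!$), as used in the statement of the preceding corollary. The key observation is that the preceding corollary, applied with the role of the ``top'' theory played by $T$ and the ``bottom'' by $T'$, gives that $\frac{k m_{T'}}{m_T}\alpha$ is a demonstrated nonjump for $T'$ whenever $\alpha$ is a demonstrated nonjump for $T$ and $k \in \{1,\dots, m_T/m_{T'}\}$ — but I actually want to run this the other way: first push the undirected nonjump down to $T$, then compare.

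The steps, in order, would be: (1) Since $J_T \subseteq S_r = J_{T_0}$, apply the theorem that a demonstrated nonjump for $T_0$ yields demonstrated nonjumps $\frac{k m_T}{r!}\cdot\frac{5r!}{2r^r} = \frac{5 k m_T}{2 r^r}$ for $T$, for $k = 1,\dots, r!/m_T$; in particular, taking $k=1$, the value $\beta := \frac{5 m_T}{2 r^r}$ is a (demonstrated) nonjump for $T$, hence a nonjump for $T$ by the lemma. (2) Now apply the same theorem along the inclusion $J_{T'} \subseteq J_T$: $\beta$ being a demonstrated nonjump for $T$, the value $\frac{k m_{T'}}{m_T}\beta = \frac{5 k m_{T'}}{2 r^r}$ is a demonstrated nonjump for $T'$ for each $k = 1,\dots, m_T/m_{T'}$. (3) Choose $k = 1$: then $\alpha := \frac{5 m_{T'}}{2 r^r}$ is a nonjump for $T'$. (4) It remains to check $\alpha$ is a jump for $T$. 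By the remark following Theorem~\ref{degenerate}, every value in $\left[0, \frac{m_T}{r^r}\right)$ is a jump for $T$, so it suffices to verify $\alpha = \frac{5 m_{T'}}{2 r^r} < \frac{m_T}{r^r}$, i.e. $\frac{5}{2} m_{T'} < m_T$, i.e. $m_T > \tfrac{5}{2} m_{T'}$. This follows from the hypothesis $m_T \geq 3 m_{T'}$. Thus $\alpha$ is simultaneously a nonjump for $T'$ and a jump for $T$.

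The main (in fact only) obstacle is purely arithmetic bookkeeping: making sure the index $k=1$ is legitimate in each application (it is, since $m_T/m_{T'} \geq 3 \geq 1$ and $r!/m_T \geq 1$), and that the final inequality $\frac{5}{2}m_{T'} < m_T$ genuinely follows from $m_T \geq 3 m_{T'}$ — which is immediate since $3 > \frac{5}{2}$. There is a little slack here, so in the write-up I would note that the argument in fact only needs $m_T > \frac{5}{2} m_{T'}$, explaining why the case $m_T = 2 m_{T'}$ is left open: with the smallest known undirected nonjump $\frac{5r!}{2r^r}$, scaling down by the factor $\frac{m_{T'}}{m_T} = \frac12$ gives $\frac{5r!}{4r^r} = \frac{5 m_{T'}}{2r^r}$, which is not below $\frac{m_T}{r^r} = \frac{2 m_{T'}}{r^r}$, so this particular construction does not resolve that case. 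No deeper input is required beyond the already-established ``jumps pass up'' corollary, the ``demonstrated nonjump descends'' theorem, and the degeneracy bound.
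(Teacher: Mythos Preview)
Your proposal is correct and arrives at exactly the same witness $\alpha = \frac{5m_{T'}}{2r^r}$ and the same arithmetic check $\frac{5}{2}m_{T'} < m_T$ as the paper. The only difference is that you route the descent through $T$ in two stages ($S_r \to T \to T'$), whereas the paper simply invokes the preceding corollary directly for $T'$ (which already handles $S_r \to T'$ in one step); this extra factoring is harmless but unnecessary.
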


\begin{proof}
Take $k=1$, then $\frac{5m_{T'}}{2r^r}$ is a nonjump for $T'$. Since $m_T \geq 3m_{T'}$, then $m_T > 2.5 m_{T'}$. So \[\frac{5m_{T'}}{2r^r} < \frac{m_T}{r^r}.\] Therefore, $\frac{5m_{T'}}{2r^r}$ is a jump for $T$ since every $\alpha \in \left[0,\frac{m_T}{r^r}\right)$ is a jump for $T$.
\end{proof}

\section{Continuity and Approximation}

The following two results are direct adaptations of two theorems from \cite{brown1984}. They are both general extremal results related to everything discussed in this paper but did not fit nicely into the other sections. The first result, Continuity, relates extremal numbers of any infinite family of GDHs to the extremal numbers of its finite subfamilies. The second, Approximation, discusses structural aspects of (nearly) extremal sequences for any forbidden family.

\begin{theorem}[Continuity]
Let $\mathcal{F}$ be an infinite family of $T$-graphs. For each $\epsilon > 0$ there exists a finite subfamily $\mathcal{F}_{\epsilon} \subset \mathcal{F}$ such that \[\text{ex}_T\left(n,\mathcal{F}\right) \leq \text{ex}_T\left(n,\mathcal{F}_{\epsilon}\right) < \text{ex}_T\left(n,\mathcal{F}\right) + \epsilon n^r\] for sufficiently large $n$.
\end{theorem}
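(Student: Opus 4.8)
The left-hand inequality is immediate: since $\mathcal{F}_\epsilon \subseteq \mathcal{F}$, every $\mathcal{F}$-free $T$-graph is automatically $\mathcal{F}_\epsilon$-free, so $\text{ex}_T(n,\mathcal{F}) \le \text{ex}_T(n,\mathcal{F}_\epsilon)$ for every $n$. All the work lies in the right-hand inequality, and the plan is to reduce it to a statement about Tur\'{a}n densities alone. Enumerate $\mathcal{F} = \{F_1, F_2, \ldots\}$, discarding any isomorphic duplicates (which changes no extremal number), and set $\mathcal{F}_k = \{F_1,\ldots,F_k\}$. The sequence $\pi_T(\mathcal{F}_1) \ge \pi_T(\mathcal{F}_2) \ge \cdots$ is non-increasing in $k$ (more forbidden graphs can only lower extremal numbers) and bounded below by $\pi_T(\mathcal{F})$, so it converges to some $L \ge \pi_T(\mathcal{F})$. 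The heart of the proof is to show $L = \pi_T(\mathcal{F})$.

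To prove $L = \pi_T(\mathcal{F})$, suppose instead that $L = \pi_T(\mathcal{F}) + 3\delta$ for some $\delta > 0$. Fix an integer $m$ and choose $k = k(m)$ large enough that $\mathcal{F}_k$ contains every member of $\mathcal{F}$ on at most $m$ vertices; this is possible because, duplicates having been removed, only finitely many members of $\mathcal{F}$ have order at most $m$. By the averaging argument used to prove that the Tur\'{a}n density exists, the normalized extremal densities $\text{ex}_T(n,\mathcal{F}_k) / (\frac{r!}{m_T}\binom{n}{r})$ decrease monotonically to their limit $\pi_T(\mathcal{F}_k) \ge L = \pi_T(\mathcal{F}) + 3\delta$, so in particular the extremal $\mathcal{F}_k$-free $T$-graph $G$ on exactly $m$ vertices has $e_T(G) \ge (\pi_T(\mathcal{F}) + 3\delta)\frac{r!}{m_T}\binom{m}{r}$. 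But a $T$-graph on $m$ vertices can contain a copy of $F_i$ only when $|V_{F_i}| \le m$, and every such $F_i$ lies in $\mathcal{F}_k$; hence $G$ is in fact $\mathcal{F}$-free. Thus $\text{ex}_T(m,\mathcal{F}) \ge (\pi_T(\mathcal{F}) + 3\delta)\frac{r!}{m_T}\binom{m}{r}$ for every $m$, contradicting $\text{ex}_T(m,\mathcal{F}) / (\frac{r!}{m_T}\binom{m}{r}) \to \pi_T(\mathcal{F})$. Therefore $\pi_T(\mathcal{F}_k) \to \pi_T(\mathcal{F})$.

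The theorem now follows quickly. Given $\epsilon > 0$, use the convergence just established to fix $k$ with $\pi_T(\mathcal{F}_k) - \pi_T(\mathcal{F}) < \tfrac{1}{2}\epsilon m_T$, and set $\mathcal{F}_\epsilon = \mathcal{F}_k$, a finite proper subfamily of the infinite family $\mathcal{F}$. Since the normalized extremal densities of any family decrease to their limit, $\text{ex}_T(n,\mathcal{F}) \ge \pi_T(\mathcal{F})\frac{r!}{m_T}\binom{n}{r}$ for all $n$. On the other hand, because $\text{ex}_T(n,\mathcal{F}_\epsilon) / (\frac{r!}{m_T}\binom{n}{r}) \to \pi_T(\mathcal{F}_\epsilon)$, for all sufficiently large $n$ (with a threshold depending only on the now-fixed $k$ and on $\epsilon$) we have $\text{ex}_T(n,\mathcal{F}_\epsilon) < (\pi_T(\mathcal{F}_\epsilon) + \tfrac{1}{2}\epsilon m_T)\frac{r!}{m_T}\binom{n}{r}$. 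Subtracting the two bounds and using $\frac{r!}{m_T}\binom{n}{r} = \frac{1}{m_T}\,n(n-1)\cdots(n-r+1)$, the difference $\text{ex}_T(n,\mathcal{F}_\epsilon) - \text{ex}_T(n,\mathcal{F})$ is strictly less than $\epsilon m_T \cdot \frac{1}{m_T}\,n(n-1)\cdots(n-r+1) = \epsilon\, n(n-1)\cdots(n-r+1) \le \epsilon n^r$, which is exactly the required bound for sufficiently large $n$.

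I expect the only genuine subtlety to be the order of quantifiers in the last step: the estimate $\text{ex}_T(n,\mathcal{F}_\epsilon) < (\pi_T(\mathcal{F}_\epsilon) + \tfrac{1}{2}\epsilon m_T)\frac{r!}{m_T}\binom{n}{r}$ needs $n$ large relative to $k$, so it is essential that $k$ is chosen first from the density limit and only afterward is $n$ sent to infinity; the $\epsilon n^r$ slack is precisely what absorbs the gap between $\text{ex}_T(n,\mathcal{F}_\epsilon)$ and its own limiting density. Everything else is the averaging argument already used to establish existence of the Tur\'{a}n density, together with the elementary observation that, for a $T$-graph on $m$ vertices, being $\mathcal{F}$-free depends only on the members of $\mathcal{F}$ of order at most $m$.
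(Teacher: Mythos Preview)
Your proof is correct and follows essentially the same route as the paper's: both arguments show $\pi_T(\mathcal{F}_k)\to\pi_T(\mathcal{F})$ via the key observation that an $\mathcal{F}_k$-free $T$-graph on $m$ vertices is automatically $\mathcal{F}$-free once $\mathcal{F}_k$ contains every member of $\mathcal{F}$ of order at most $m$, combined with the monotonicity of normalized extremal densities. The only cosmetic differences are that the paper indexes $\mathcal{F}_k$ directly by vertex bound rather than by an enumeration, and that you carry out explicitly the final passage from $\pi_T(\mathcal{F}_k)\to\pi_T(\mathcal{F})$ to the $\epsilon n^r$ bound, which the paper leaves implicit.
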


\begin{proof}
Let $\mathcal{F}$ be the infinite family of GDHs. For each positive integer $k$ let $\mathcal{F}_k$ be the subfamily of $\mathcal{F}$ where each member has at most $k$ vertices. Let \[\gamma_k = \lim_{n \rightarrow \infty} \frac{\text{ex}_T\left(n, \mathcal{F}_k\right)}{\frac{r!}{m_T} {n \choose r}}\] and let \[\gamma = \lim_{n \rightarrow \infty} \frac{\text{ex}_T\left(n, \mathcal{F}\right)}{\frac{r!}{m_T} {n \choose r}}.\] Since $\mathcal{F}_k \subset \mathcal{F}$, then $\{\gamma_k\}_{k=1}^{\infty}$ is a monotone decreasing sequence and $\gamma_k \geq \gamma$ for all $k$.

Assume for some $\epsilon > 0$ that $\gamma_k > \gamma + \epsilon$ for all $k$. Note that \[\frac{\text{ex}_T\left(n,\mathcal{F}_k\right)}{\frac{r!}{m_T} {n \choose r}} \geq \gamma_k\] is true for all $n$. In particular, when $n=k$ there is an $\mathcal{F}_n$-free GDH on $n$ vertices with strictly more than $\left( \gamma + \epsilon \right) \frac{r!}{m_T} {n \choose r}$ edges. Since an $\mathcal{F}_n$-free GDH on $n$ vertices is also necessarily $\mathcal{F}$-free, then this implies that \[\text{ex}_T\left(n, \mathcal{F} \right) > \left(\gamma + \epsilon\right) \frac{r!}{m_T} {n \choose r},\] a contradiction.
\end{proof}

Theorem 6 in \cite{brown1984} is the Approximation Theorem for totally directed $r$-uniform hypergraphs with bounded multiplicity. We will use the following equivalent statement (in the case of multiplicity one) written in terms of Tur\'{a}n densities as a lemma to prove that this approximation result holds for all GDHs.

\begin{lemma}
\label{approx}
Let $\mathcal{F}'$ be a family of forbidden totally directed $r$-graphs ($r$-GDHs under the trivial group), and let $\epsilon>0$. Then there exists some totally directed $r$-graph $G'$ such that every blowup of $G'$ is $\mathcal{F}'$-free and \[\pi(\mathcal{F}') \geq b(G) > \pi(\mathcal{F}') - \epsilon.\]
\end{lemma}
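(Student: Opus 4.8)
The plan is to build the required totally directed $r$-graph $G'$ as a single sufficiently large member of a near-extremal $\mathcal{F}'$-free sequence, after first ``cleaning it up'' so that all of its blowups remain $\mathcal{F}'$-free. First I would fix $\epsilon > 0$ and, using that $\pi(\mathcal{F}') = \lim_{n\to\infty} \text{ex}(n,\mathcal{F}')/\binom{n}{r}$ (under the trivial group, so $m_T = 1$), choose $n$ large enough that there is an $\mathcal{F}'$-free totally directed $r$-graph $H$ on $n$ vertices with edge density exceeding $\pi(\mathcal{F}') - \epsilon/2$. The naive choice $G' = H$ need not work because a blowup of $H$ could create a copy of some $F \in \mathcal{F}'$ that $H$ itself avoids; the fix is to delete edges. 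The key step is a supersaturation-style argument: if \emph{every} large blowup of $H$ contained some member of $\mathcal{F}'$, then by Theorem~\ref{supersaturation} (supersaturation) together with the Ramsey argument in the proof of Theorem~\ref{blowup}, $H$ would have to contain a copy of that member already — contradicting $\mathcal{F}'$-freeness. More precisely, I would argue contrapositively at the level of a single graph: call an edge of $H$ ``bad'' if it participates in a copy (in some blowup) of a forbidden graph; iteratively delete bad edges. Since each $F \in \mathcal{F}'$ has boundedly many vertices, a copy of $F$ in a blowup of $H$ projects down to a homomorphic image in $H$, and because $F$ is a totally directed edge set with trivial automorphism group acting, this homomorphic image collapses at least two vertices of $F$ onto one vertex of $H$ — but then by replacing the collapsed pair and using that $F$'s blowup $F(t)$ is already $\mathcal{F}'$-free only when... here one must be careful, so let me instead take the cleaner route below.

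\textbf{Cleaner approach.} The plan is to apply Theorem~\ref{blowup} directly. Let $G'$ be any near-extremal $\mathcal{F}'$-free totally directed $r$-graph on $N$ vertices with density $> \pi(\mathcal{F}') - \epsilon$, chosen with $N$ large. Now I claim we may assume every blowup of $G'$ is $\mathcal{F}'$-free, else we repair $G'$ as follows: consider the (finite) family $\mathcal{F}'_{\le N}$ of all $F \in \mathcal{F}'$ with at most $N$ vertices together with all blowups $F(t)$ with total size at most $N$; this is still essentially $\mathcal{F}'$ for the purposes of $N$-vertex host graphs. If a blowup $G'(s)$ contains some $F \in \mathcal{F}'$, then since $\pi(F) = \pi(F(t)) = \pi(\mathcal{F}')$... the point is that blowing up does not change Turán density, so forbidding $\{F(t) : F \in \mathcal{F}', t\}$ has the same Turán density as forbidding $\mathcal{F}'$. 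Concretely: let $\mathcal{F}'' = \{F(t) : F \in \mathcal{F}',\ t \text{ a positive-integer tuple}\}$. By Theorem~\ref{blowup}, $\pi(\mathcal{F}'') = \pi(\mathcal{F}')$ (since $\mathcal{F}' \subseteq \mathcal{F}''$ gives one inequality, and any $\mathcal{F}'$-free extremal sequence can be replaced by blowup-closed graphs of the same density using the Ramsey argument — or more simply, $\pi(\mathcal{F}'') \le \pi(\{F\}) $ is not quite it either). So I would instead take $G'$ to be a near-extremal \emph{$\mathcal{F}''$-free} graph: then automatically every blowup of $G'$ is $\mathcal{F}'$-free, because a blowup $G'(s)$ containing $F$ would mean $G'$ contains some $F(t)$ (by a Ramsey/pigeonhole argument on which part each vertex of $F$ lands in, exactly as in the proof of Theorem~\ref{blowup}), contradicting $\mathcal{F}''$-freeness. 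And since $\pi(\mathcal{F}'') = \pi(\mathcal{F}')$, the graph $G'$ has density $b(G') \ge d(G') > \pi(\mathcal{F}'') - \epsilon = \pi(\mathcal{F}') - \epsilon$, while $b(G') \le \pi(\mathcal{F}')$ because $G'$ and all its blowups are $\mathcal{F}'$-free, so $b(G')$ is a limit of densities of $\mathcal{F}'$-free graphs and hence at most $\pi(\mathcal{F}')$.

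\textbf{Summary of steps.} (1) Show $\pi(\mathcal{F}'') = \pi(\mathcal{F}')$ where $\mathcal{F}'' $ is the blowup-closure of $\mathcal{F}'$: the inequality $\pi(\mathcal{F}'') \le \pi(\mathcal{F}')$ is immediate from $\mathcal{F}' \subseteq \mathcal{F}''$, and $\pi(\mathcal{F}'') \ge \pi(\mathcal{F}')$ follows because any $\mathcal{F}'$-free sequence witnessing $\pi(\mathcal{F}')$ can, after passing to blowups and applying the Ramsey argument from Theorem~\ref{blowup}, be converted to an $\mathcal{F}''$-free sequence of the same limiting density — alternatively invoke Theorem~\ref{blowup} member by member. (2) Pick $G'$ near-extremal $\mathcal{F}''$-free on large $N$ vertices, so $d(G') > \pi(\mathcal{F}'') - \epsilon$. (3) Observe every blowup of $G'$ is $\mathcal{F}'$-free, by the pigeonhole/Ramsey collapse argument: a copy of $F \in \mathcal{F}'$ in $G'(s)$ forces a copy of $F(t)$ in $G'$ for some $t$, contradicting $\mathcal{F}''$-freeness. (4) Conclude $\pi(\mathcal{F}') \ge b(G')$ since all blowups of $G'$ are $\mathcal{F}'$-free and $b(G')$ is the supremal limiting density of those blowups; and $b(G') \ge d(G') > \pi(\mathcal{F}') - \epsilon$. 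I expect step (1) — pinning down that the blowup-closure has the same Turán density, with $\mathcal{F}'$ possibly infinite — to be the main obstacle, since Theorem~\ref{blowup} is stated for a single GDH and one must check the argument survives taking the union over an infinite family; a compactness or diagonalization remark, or restricting to the relevant finite subfamily via the Continuity theorem, should handle it.
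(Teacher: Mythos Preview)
The paper does not prove this lemma at all; it quotes it as (a reformulation of) Theorem~6 of Brown--Simonovits \cite{brown1984} and uses it as a black box. So there is no ``paper's proof'' to compare against. That said, your proposed argument contains a genuine gap.

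The error is in step (3)--(4). You claim that if a blowup $G'(s)$ contains a copy of some $F \in \mathcal{F}'$, then $G'$ must contain a copy of $F(t)$ for some $t$. This implication runs in the wrong direction. An embedding $F \hookrightarrow G'(s)$ composed with the projection $G'(s) \to G'$ gives only a \emph{homomorphism} $F \to G'$; equivalently, $G'$ contains some \emph{homomorphic image} $H$ of $F$ as a subgraph, and it is $H(t)$ that contains $F$, not $G'$ that contains $F(t)$. The Ramsey argument from Theorem~\ref{blowup} does not help here: that argument turns \emph{many} copies of $F$ inside a host into a copy of $F(t)$ inside the same host, which is a different situation. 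In fact your family $\mathcal{F}''=\{F(t):F\in\mathcal{F}',\,t\}$ gives nothing new, since $F\subseteq F(t)$ implies that $\mathcal{F}''$-free and $\mathcal{F}'$-free are literally the same condition; so the substitution in step~(2) is vacuous and the near-extremal $\mathcal{F}''$-free $G'$ is just a near-extremal $\mathcal{F}'$-free graph, whose blowups need not be $\mathcal{F}'$-free.

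The family that would make step~(3) work is the \emph{homomorphic-image closure} $\mathcal{F}^{*}=\{H:\exists F\in\mathcal{F}',\ F\to H\ \text{surjective homomorphism}\}$, since ``every blowup of $G'$ is $\mathcal{F}'$-free'' is equivalent to ``$G'$ is $\mathcal{F}^{*}$-free''. But then the whole content of the lemma becomes the assertion $\pi(\mathcal{F}^{*})=\pi(\mathcal{F}')$, and your step~(1) would have to establish exactly that --- which is no easier than the lemma itself (a near-extremal $\mathcal{F}'$-free graph can perfectly well contain homomorphic images of forbidden graphs). This is where the real work lies, and it is what Brown--Simonovits supply; your outline does not.
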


\begin{theorem}[Approximation]
Let $\mathcal{F}$ be a family of forbidden $T$-graphs, and let $\epsilon > 0$, then there exists some $T$-graph $G$ for which all blowups of $G$ are $\mathcal{F}$-free and \[\pi_T(\mathcal{F}) \geq b_T(G) > \pi_T(\mathcal{F}) - \epsilon.\]
\end{theorem}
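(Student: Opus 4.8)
The plan is to reduce the Approximation Theorem for an arbitrary GDH theory $T$ to the already-granted Lemma~\ref{approx}, which is the special case where the group is trivial (totally directed $r$-graphs). The bridge between the two settings is the minimum-$T$-container construction together with Proposition~\ref{containing} and Theorem~\ref{thmC}, which together let us pass densities and blowup densities back and forth across the subgroup lattice. Concretely, let $T_0$ denote the trivial-group theory with $J_{T_0}=\{i\}$ and $m_{T_0}=1$, so that $J_{T_0}\subseteq J_T\subseteq S_r$ and every $T$-edge contains exactly $m_T$ totally directed edges.

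First I would use Theorem~\ref{thmC} to produce, from the forbidden family $\mathcal{F}$ of $T$-graphs, a family $\mathcal{F}'$ of $T_0$-graphs (the union of the sets $F_{T_0}$ of totally directed graphs having exactly one edge inside each edge of $F$, over $F\in\mathcal{F}$) with $\pi_{T_0}(\mathcal{F}') = \pi_T(\mathcal{F})$. Then I would apply Lemma~\ref{approx} to $\mathcal{F}'$ and a suitable $\epsilon' > 0$ (something like $\epsilon' = m_T\,\epsilon$, to be pinned down) to get a totally directed $r$-graph $G'$ all of whose blowups are $\mathcal{F}'$-free and with $\pi_{T_0}(\mathcal{F}') \geq b_{T_0}(G') > \pi_{T_0}(\mathcal{F}') - \epsilon'$. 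Now let $G$ be the minimum $T$-container of $G'$ — i.e., keep the vertex set of $G'$ and add, for each $T_0$-edge of $G'$, the unique $T$-edge containing it (collapsing coincidences). Since $G'$ then has at least one edge inside every edge of $G$, Proposition~\ref{containing} gives $b_{T_0}(G') \geq \frac{1}{m_T} b_T(G)$, i.e. $b_T(G) \leq m_T\, b_{T_0}(G')$; and since $G'$ is itself a subgraph-on-the-same-vertices of the full blowup structure, $b_{T_0}(G') \leq b_T(G)$ as well. Combined with the Lemma's bounds and $\pi_{T_0}(\mathcal{F}') = \pi_T(\mathcal{F})$, elementary arithmetic with the factor $m_T$ yields $\pi_T(\mathcal{F}) \geq b_T(G) > \pi_T(\mathcal{F}) - \epsilon$ once $\epsilon'$ is chosen correctly; so the density estimate will fall out cleanly.

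The step I expect to be the real obstacle is establishing the \emph{structural} half of the conclusion: that \emph{every} blowup of $G$ is $\mathcal{F}$-free, given only that every blowup of $G'$ is $\mathcal{F}'$-free. The subtlety is that a blowup $G(t)$ of the $T$-container can contain copies of members $F\in\mathcal{F}$ whose witnessing totally directed ``shadow'' need not be a blowup of $G'$ but only some subgraph of a blowup of $G'$ — and also $F$'s own edges in $G(t)$ might not all be ``used'' in a single consistent orientation. To handle this I would argue as follows: suppose some $G(t)$ contains a copy of $F\in\mathcal{F}$ via an injective homomorphism $\psi$. Pulling back through the blowup projection $G(t)\to G$, each edge of $F$ maps into an edge of $G$, hence (choosing, for each such edge of $F$, the $T_0$-edge of $G'$ inside the corresponding edge of $G$ whose orientation matches how $F$'s edge sits) we recover a copy of some member of $F_{T_0}\subseteq\mathcal{F}'$ inside an appropriate blowup of $G'$ — here I use that $G'$ has \emph{an} edge inside each edge of $G$, and by taking a large enough blowup of $G'$ (Theorem~\ref{blowup}, or just the definition of blowup) I can realize any fixed finite configuration that lives in a blowup of $G$. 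This contradicts $\mathcal{F}'$-freeness of all blowups of $G'$. The delicate bookkeeping is making sure the chosen $T_0$-edges are consistent enough to assemble a genuine copy of a member of $\mathcal{F}'$ rather than merely an edge-by-edge match; this is exactly the orientation-consistency point that made Theorem~\ref{thmC} work, and I would lean on the same idea — since $\mathcal{F}'$ includes \emph{all} one-edge-per-$F$-edge selections, any consistent-or-not choice of orientations still lands in $\mathcal{F}'$, so consistency is in fact automatic. Once that is in place, the theorem follows by combining the structural claim with the density inequalities from the previous paragraph.
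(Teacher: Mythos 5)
Your proposal is correct and follows essentially the same route as the paper: pass to the totally directed family $\mathcal{F}'$ built in Theorem~\ref{thmC}, apply Lemma~\ref{approx}, take the minimum $T$-container $G$ of the resulting $G'$, and combine Proposition~\ref{containing} with the observation that a blowup of $G$ is the minimum container of the corresponding blowup of $G'$ (so your ``any one-edge-per-edge selection lands in $\mathcal{F}'$'' argument for the structural half is exactly the right, and in fact automatic, point). The only adjustment is that no factor of $m_T$ is needed when choosing $\epsilon'$: the direction of Proposition~\ref{containing} you actually use is $b_T(G) \geq b_{T_0}(G')$, so taking $\epsilon' = \epsilon$ gives $b_T(G) \geq b_{T_0}(G') > \pi_{T_0}(\mathcal{F}') - \epsilon = \pi_T(\mathcal{F}) - \epsilon$ directly, while the upper bound $\pi_T(\mathcal{F}) \geq b_T(G)$ comes from the structural claim that all blowups of $G$ are $\mathcal{F}$-free.
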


\begin{proof}
Let $\mathcal{F}'$ be the family of totally directed $r$-graphs as defined in the proof of Theorem~\ref{thmC}. That is, the family of directed hypergraphs for which we know that $\pi(\mathcal{F}') = \pi_T(\mathcal{F})$. We know from the proof of that theorem that any $T$-graph that is the minimal container for an $\mathcal{F}'$-free graph is $\mathcal{F}$-free. By Lemma~\ref{approx} there exists some totally directed $\mathcal{F}'$-free $r$-graph , $G'$, such that \[\pi(\mathcal{F}') \geq b(G) > \pi(\mathcal{F}') - \epsilon.\] By Proposition~\ref{containing} we know that if $G'$ is the minimal containing $T$-graph of $G$, then $b_T(G) \geq b(G')$. Hence, \[\pi_T(\mathcal{F}) \geq b_T(G) \geq b(G') > \pi(\mathcal{F}') - \epsilon = \pi_T(\mathcal{F}) - \epsilon.\]
\end{proof}

\section{Conclusion}

Some questions naturally come up in studying GDHs. Most notably it would be nice to show that the set of jumps for some GDH theory $T'$ is a proper subset of the set of jumps of any theory $T'$ up the lattice including those for which $m_T = 2m_{T'}$. Or on the other hand it would be very interesting to learn that this is not true in certain cases for $r \geq 3$!

\begin{conjecture}
Let $T'$ and $T$ be $r$-ary GDH theories for $r \geq 3$ such that $J_{T'} \subseteq J_T$ and $m_T = 2m_{T'}$. Then there exists some $\alpha \in [0,1)$ for which $\alpha$ is a jump for $T$ but not for $T'$.
\end{conjecture}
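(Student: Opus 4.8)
The plan is to produce a single real number $\alpha$ that is at once a \emph{demonstrated nonjump} for $T'$ and a \emph{jump} for $T$; since every demonstrated nonjump is a nonjump, such an $\alpha$ proves the conjecture. The reason the argument that settled the case $m_T \geq 3m_{T'}$ fails here is purely arithmetic: scaling the Frankl--Peng--R\"{o}dl--Talbot demonstrated nonjump $\frac{5r!}{2r^r}$ for undirected $r$-graphs down the lattice to $T'$ gives the demonstrated nonjump $\frac{5m_{T'}}{2r^r}$, and when $m_T = 2m_{T'}$ this equals $\frac{5m_T}{4r^r} > \frac{m_T}{r^r}$, so it lies \emph{outside} the interval $\left[0,\frac{m_T}{r^r}\right)$ of values that are automatically jumps for $T$ by Theorem~\ref{degenerate}. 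Hence the ``below the degenerate threshold'' certificate is not available, and one needs either a genuinely smaller nonjump for $T'$ or a strictly larger jump set for $T$.

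First I would chase the smaller nonjump. By Proposition~\ref{containing} and the theorem that demonstrated nonjumps pass down the lattice, every demonstrated nonjump $\beta$ for undirected $r$-graphs yields demonstrated nonjumps $\frac{k m_{T'}}{r!}\beta$ for $T'$ for $k = 1,\ldots,\frac{r!}{m_{T'}}$; taking $k=1$, the value $\frac{m_{T'}}{r!}\beta$ is a nonjump for $T'$ lying in $\left[\frac{m_{T'}}{r^r},\frac{2m_{T'}}{r^r}\right) = \left[\frac{m_{T'}}{r^r},\frac{m_T}{r^r}\right)$ precisely when $\beta \in \left[\frac{r!}{r^r},\frac{2r!}{r^r}\right)$, and then it is automatically a jump for $T$. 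So the whole conjecture --- uniformly over all index-two pairs --- would follow from one new fact about undirected hypergraphs: that \emph{some demonstrated nonjump for $r$-graphs lies in $\left[\frac{r!}{r^r},\frac{2r!}{r^r}\right)$} (for $r=3$ this window is $\left[\frac{2}{9},\frac{4}{9}\right)$, currently containing no known nonjump, only the two Baber--Talbot jump subintervals). Failing that, I would instead try to enlarge the jump set of $T$: with $\beta$ a known demonstrated nonjump for $r$-graphs and $\alpha = \frac{m_{T'}}{r!}\beta$ the resulting demonstrated nonjump for $T'$, attempt to show $\alpha$ is a jump for $T$ via Razborov's flag algebra method, which applies verbatim to the relational theory $T$. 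For the pair $T' = \mathbb{Z}_3$ (cyclically oriented $3$-sets), $T = S_3$ (undirected $3$-graphs), and $\beta = \frac{5}{9}$, this asks precisely whether $\frac{5}{18}$ is a jump for undirected $3$-graphs --- a number sitting just below the Baber--Talbot jump window $\left[0.2871,\frac{8}{27}\right)$ and hence a natural target for a flag-algebra computation. The remaining $r=3$ pair (the trivial group inside $\mathbb{Z}_2$, i.e.\ totally directed versus $(2\to1)$ $3$-sets) and the index-two pairs arising for $r \geq 4$ would be treated the same way, now running flag algebras over the directed theory $T$.

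The step I expect to be the main obstacle is supplying this new input, since neither route currently has it. No demonstrated nonjump for undirected $r$-graphs is known below $\frac{5r!}{2r^r}$, whereas the degenerate-threshold route needs one below the strictly smaller bound $\frac{2r!}{r^r}$; pushing the known nonjumps that far down is itself an open problem in hypergraph extremal theory. The flag-algebra route is equally uncertain: the relevant jump regions for $T$ --- a neighbourhood of $\frac{5}{18}$ for undirected $3$-graphs, or any nontrivial jump interval for the pointed or cyclic theories --- have not been computed, and it is entirely possible that $\frac{5}{18}$ is a \emph{nonjump} for $3$-graphs (it lies in the undetermined range between $0.2316$ and $0.2871$), in which case it is useless as a witness and a different value must be found. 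I therefore expect the conjecture to need a genuinely new ingredient: either a smaller undirected-hypergraph nonjump, or a Frankl--R\"{o}dl-type construction carried out intrinsically for $T'$-graphs that exploits the extra orientations a $T'$-graph may carry inside its minimal $T$-container to build a demonstrated nonjump for $T'$ of value below $\frac{m_T}{r^r}$ --- that is, a construction strictly ``denser per orientation'' than anything the pass-down scaling produces.
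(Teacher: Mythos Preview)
The statement you are attempting to prove is labelled \emph{Conjecture} in the paper, and the paper offers no proof of it; it is explicitly presented in the Conclusion as an open problem. So there is no ``paper's own proof'' to compare against.

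Your proposal is honest about this: it is not a proof but a research outline, and you say as much when you write that ``neither route currently has it'' and that the conjecture ``need[s] a genuinely new ingredient.'' Your analysis of \emph{why} the $m_T \geq 3m_{T'}$ argument fails at index two is correct --- the scaled Frankl--Peng--R\"{o}dl--Talbot nonjump $\frac{5m_{T'}}{2r^r}$ lands at $\frac{5m_T}{4r^r}$, which exceeds the degenerate threshold $\frac{m_T}{r^r}$ for $T$ --- and this matches the paper's implicit reasoning for stating the conjecture rather than a theorem. Your two suggested attacks (find an undirected demonstrated nonjump below $\frac{2r!}{r^r}$, or certify via flag algebras that some specific scaled nonjump for $T'$ is a jump for $T$) are reasonable directions, but as you acknowledge, both reduce to open problems in hypergraph extremal theory.

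In short: there is no gap to name because you have not claimed a proof; you have correctly identified the obstruction and sketched plausible lines of attack, none of which is currently known to succeed. The conjecture remains open.
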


It is known by a result in \cite{brown1984} that every $\alpha \in [0,1)$ is a jump for digraphs. Therefore, the conjecture is not true when $r=2$. On a related note, is it always true that when $J_{T'} \subset J_T$, there always exists a family $\mathcal{F}'$ of $T'$-graphs such that $\pi_{T'}(\mathcal{F}')$ is not contained in the set of Tur\'{a}n densities for $T$?

\begin{conjecture}
Let $T'$ and $T$ be theories such that $J_{T'} \subseteq J_T$. Then there exists some family $\mathcal{F}'$ of $T'$-graphs such that $\pi_{T'}(\mathcal{F}')$ is not contained in the set of Tur\'{a}n densities for $T$.
\end{conjecture}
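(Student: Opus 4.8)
The plan is to produce one explicit value that is a Tur\'{a}n density for $T'$ but, by the degeneracy dichotomy of Theorem~\ref{degenerate}, cannot be a Tur\'{a}n density for $T$; no knowledge of the (largely mysterious) global structure of the set of Tur\'{a}n densities is required. Assume $J_{T'}\subsetneq J_T$, the case $J_{T'}=J_T$ being trivial. By Theorem~\ref{thmC} every Tur\'{a}n density of a family of $T$-graphs is also a Tur\'{a}n density of a family of $T'$-graphs, so the content of the conjecture is that this inclusion is strict. Theorem~\ref{degenerate} tells us that the set of Tur\'{a}n densities of $T$-families lies in $\{0\}\cup\left[\tfrac{m_T}{r^r},1\right]$; since $1\le m_{T'}<m_T$, the value $\tfrac{m_{T'}}{r^r}$ falls strictly inside the forbidden gap $\left(0,\tfrac{m_T}{r^r}\right)$. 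Hence it suffices to exhibit a family $\mathcal{F}'$ of $T'$-graphs with $\pi_{T'}(\mathcal{F}')=\tfrac{m_{T'}}{r^r}$.

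The candidate is the ``quantitative form'' of the degeneracy characterization for $T'$ itself. Let $S$ be a single $T'$-edge (the $T'$-graph on $r$ vertices with one edge), and let $\mathcal{F}'$ consist of all $T'$-graphs that admit \emph{no} injective homomorphism into any blowup $S(t)$, $t=(t_1,\ldots,t_r)$. For the lower bound: each $S(t)$ with $t_1+\cdots+t_r=n$ is $\mathcal{F}'$-free (it maps into itself), and since the edge polynomial of a single edge is $p_S(x)=\prod_{i=1}^r x_i$ it has $e_{T'}(S(t))=\prod_{i=1}^r t_i$ edges; balancing the $t_i$ gives $\prod t_i=(1+o(1))\left(\tfrac{n}{r}\right)^r$, so $\pi_{T'}(\mathcal{F}')\ge\tfrac{m_{T'}}{r^r}$.

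The step most in need of care — essentially the only one — is the matching upper bound, via the equivalence: a $T'$-graph $G$ is $\mathcal{F}'$-free if and only if $G$ itself admits an injective homomorphism into some $S(t)$. Indeed, if no such homomorphism exists then $G\in\mathcal{F}'$ and $G$ trivially contains a copy of itself, so $G$ is not $\mathcal{F}'$-free; conversely, if $G$ embeds into some $S(t)$ then, composing injective homomorphisms, so does every subGDH of $G$, and $G$ therefore contains no member of $\mathcal{F}'$. Such a $G$ on $n$ vertices comes with a partition of its vertices into $r$ parts in which every edge is a transversal, and a short check (using that $J_{T'}$ is a group) shows there is exactly one $T'$-edge realizable on each transversal; hence $e_{T'}(G)\le\max_{t_1+\cdots+t_r=n}\prod t_i\le\left(\tfrac{n}{r}\right)^r$. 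Thus $\pi_{T'}(\mathcal{F}')\le\tfrac{m_{T'}}{r^r}$, and with the previous paragraph $\pi_{T'}(\mathcal{F}')=\tfrac{m_{T'}}{r^r}$ — a value which, by Theorem~\ref{degenerate}, is not a Tur\'{a}n density for $T$.

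So the ``hard part'' reduces to the embedding dichotomy above together with the identity $e_{T'}(S(t))=\prod t_i$, both of which are the proof of Theorem~\ref{degenerate} made quantitative; the remaining limit arithmetic is identical to that already carried out for Theorems~\ref{blowup} and~\ref{degenerate}. Two remarks I would append. First, the hypothesis in the conjecture should be read as $J_{T'}\subsetneq J_T$ (equality forces $T'=T$, making the statement fail). Second, one can plausibly sharpen this to a \emph{finite} family $\mathcal{F}'$, by forbidding only the finitely many minimal obstructions to embedding into a blowup of a single $T'$-edge; but verifying that such a finite family still has Tur\'{a}n density exactly $\tfrac{m_{T'}}{r^r}$ — rather than dropping strictly below it — would need an additional stability-type argument, whereas the infinite family above already settles the conjecture as stated.
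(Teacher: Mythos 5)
There is nothing in the paper to compare against here: this statement appears only as an open conjecture in the Conclusion, with no proof offered. Your argument, read with the hypothesis $J_{T'}\subsetneq J_{T}$ (equality makes the statement false, as you correctly note), appears to be a complete and correct proof, and a short one. Both pillars check out. First, Theorem~\ref{degenerate} does hold for arbitrary (not only finite) families, so every Tur\'{a}n density for $T$ lies in $\{0\}\cup\left[\frac{m_T}{r^r},1\right]$, and since $J_{T'}$ is a proper subgroup of $J_T$ we have $m_{T'}\leq \frac{m_T}{2}$, so $\frac{m_{T'}}{r^r}$ sits strictly inside the forbidden gap for $T$. Second, the evaluation $\pi_{T'}(\mathcal{F}')=\frac{m_{T'}}{r^r}$ is sound: the lower bound is literally the first half of the proof of Theorem~\ref{degenerate} applied to $T'$, since by construction no member of $\mathcal{F}'$ is contained in a blowup of a single edge; for the upper bound, your dichotomy (``$G$ is $\mathcal{F}'$-free iff $G$ embeds in some $S(t)$'') is right --- one direction because $G$ contains itself, the other by composing injective homomorphisms --- and an injective homomorphism carries distinct edges (equivalence classes) to distinct edges while each transversal $r$-set of $S(t)$ supports exactly one edge because $E_S$ is a single $J_{T'}$-orbit, so $e_{T'}(G)\leq\prod s_i\leq (n/r)^r$ over the induced $r$-partition, giving density $\frac{m_{T'}}{r^r}+o(1)$. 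Two small points to tighten: you should observe explicitly that $\mathcal{F}'\neq\emptyset$ (otherwise $\pi_{T'}(\mathcal{F}')=1$) --- for instance any $T'$-graph carrying two distinct edges on a single $r$-set when $m_{T'}<r!$, or the complete $r$-graph on $r+1$ vertices when $m_{T'}=r!$, is a member --- and your closing remark about replacing $\mathcal{F}'$ by the finite set of minimal obstructions is rightly flagged as unproven, but is not needed since the conjecture only asks for ``some family.'' As written, this settles the paper's second conjecture affirmatively.
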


Finally, it would be nice to generalize the definition of a GDH to include other combinatorial structures. For instance we could easily change the current formulation to include multiple relations in order to capture nonuniform GDHs and those with edges that have bounded multiplicity like the structures studied in \cite{brown1984}. We could even allow these theories to contain general statements that relate the different relations. An example of this might be the theory of some kind of GDH with an edge-coloring that behaves in a certain way (at least locally). In another direction we could take away the requirement that all vertices of an edge be distinct to allow for kinds of generalized loops or add a condition that the existence of certain edges preclude the existence of others such as in the oriented cases studied in \cite{leader2010}, \cite{cameron2015}, and \cite{cameron2015deg}.

\bibliography{GDH}
\bibliographystyle{plain}

\end{document}